\def\sqr#1#2{{\vcenter{\vbox{\hrule height.#2pt
              \hbox{\vrule width.#2pt height#1pt \kern#1pt \vrule width.#2pt}
              \hrule height.#2pt}}}}
\def\5n{\negthinspace \negthinspace \negthinspace \negthinspace \negthinspace }
\def\4n{\negthinspace \negthinspace \negthinspace \negthinspace }
\def\3n{\negthinspace \negthinspace \negthinspace }
\def\2n{\negthinspace \negthinspace }
\def\1n{\negthinspace }
\def\dbE{\mathbb{E}}     
\def\dbF{\mathbb{F}}   \def\cF{{\cal F}}  
\def\dbH{\mathbb{H}}
\def\dbP{\mathbb{P}}
\def\ss{\smallskip}                
\def\ms{\medskip}                
\def\ds{\displaystyle}           
\def\no{\noindent}        \def\q{\quad}                      
\def\ns{\noalign{\ss}}    \def\qq{\qquad}                    
    \def\hb{\hbox}                     
         \def\rf{\eqref}                    
  \def\deq{\triangleq}               
            \def\({\Big (}
\def\les{\leqslant}                  \def\){\Big )}
\def\ges{\geqslant}          \def\[{\Big[}
           \def\]{\Big]}
                   \def\cd{\cdot}
\def\a{\alpha}        \def\G{\Gamma}         
\def\b{\beta}         \def\D{\Delta}
           \def\i{\infty}   
\def\a{\alpha}        \def\G{\Gamma}         
\def\b{\beta}         \def\D{\Delta}
           \def\i{\infty}   
\def\Om{\Omega}         
\def\bde{\begin{definition}\label}  \def\ede{\end{definition}}
\def\be{\begin{equation}}           \def\bel{\begin{equation}\label}   \def\ee{\end{equation}}
\def\bt{\begin{theorem}\label}      \def\et{\end{theorem}}
\def\bc{\begin{corollary}\label}    \def\ec{\end{corollary}}
\def\bl{\begin{lemma}\label}        \def\el{\end{lemma}}
\def\bp{\begin{proposition}\label}  \def\ep{\end{proposition}}
\def\bas{\begin{assumption}\label}  \def\eas{\end{assumption}}
\def\br{\begin{remark}\label}       \def\er{\end{remark}}
\def\bex{\begin{example}\label}     \def\ex{\end{example}}
\def\ba{\begin{array}}              \def\ea{\end{array}}
\def\ben{\begin{enumerate}}         \def\een{\end{enumerate}}
\def\bt{\textcolor{blue}}
\newtheorem{theorem}{Theorem}[section]
\newtheorem{definition}[theorem]{Definition}
\newtheorem{proposition}[theorem]{Proposition}
\newtheorem{corollary}[theorem]{Corollary}
\newtheorem{lemma}[theorem]{Lemma}
\newtheorem{remark}[theorem]{Remark}
\newtheorem{example}[theorem]{Example}
\begin{document}

\title{\bf Backward doubly stochastic Volterra integral equations and applications to optimal control problems}

\author{Yufeng Shi\thanks{Institute for Financial Studies and School of Mathematics, Shandong University,
Jinan 250100, China (yfshi@sdu.edu.cn). This author was supported by National Science Foundation of China (Grant Nos. 11871309 and 11626247), the Foundation for Innovative Research Groups of National Natural Science Foundation of China (Grant No. 11221061).}\ , \
Jiaqiang Wen\thanks{Department of
Mathematics, Southern University of Science and Technology, Shenzhen, Guangdong, 518055, China
(wenjq@sustech.edu.cn). This author was partially supported by the National Science Foundation of China (Grant No. 11671229).}\ , \ and \
Jie Xiong\thanks{Department of Mathematics, Southern University of
Science and Technology, Shenzhen, Guangdong, 518055, China (xiongj@sustech.edu.cn). This author was supported by Southern University of Science and Technology Start up fund Y01286120 and National Science Foundation of China grants 61873325 and 11831010.}}

\date{}
\maketitle

\vspace{6mm}

\no\bf Abstract: \rm Backward doubly stochastic Volterra integral equations (BDSVIEs, for short) are introduced and studied systematically. Well-posedness of BDSVIEs in the sense of introduced M-solutions is established. A comparison theorem for BDSVIEs is proved. By virtue of the comparison theorem, we derive the existence of solutions for BDSVIEs with continuous coefficients. Furthermore, a duality principle between linear (forward) doubly stochastic Volterra integral equation (FDSVIE, for short) and BDSVIE is obtained. A Pontryagin type maximum principle is also established for an optimal control problem of FDSVIEs.

\ms

\no\bf Key words: \rm Backward doubly stochastic Volterra integral equations; Comparison theorem; Duality principle; Pontryagin maximum principle

\ms

\no\bf AMS subject classifications. \rm 60H10; 60H20; 93E20

\section{Introduction}

Throughout this paper, let $(\Om,\cF,\dbF,\dbP)$ be a complete filtered probability space and $T>0$ be a fixed terminal time. Let $W=\{W(t); 0\les t<\i\}$ and $B=\{B(t); 0\les t<\i\}$  be two mutually independent standard Brownian motion processes, with values respectively in $\mathbb{R}^{d}$ and in $\mathbb{R}^{l}$, defined on $(\Om,\cF,\dbF,\dbP)$.
Denote by $\mathcal{N}$ the class of $\dbP$-null sets of $\mathcal{F}$.
For each $t\in [0,T]$, we define
 \begin{equation}\label{0}
 \mathcal{F}_{t} \deq \mathcal{F}_{t}^{W} \vee \mathcal{F}_{t,T}^{B},
 \end{equation}
 where for any process $\eta(\cd)$,  $\mathcal{F}_{s,t}^{\eta}\deq\sigma\{\eta(r)-\eta(s);s\les r\les t\}\vee \mathcal{N}$ and  $\mathcal{F}_{t}^{\eta}\deq\mathcal{F}_{0,t}^{\eta}$. Denote $\Delta=\{ (t,s)\in[0,T]^{2}| \ t\les s\}$ and $\Delta^{c}=\{ (t,s)\in[0,T]^{2}| \ t>s\}$. Consider the following integral equation on a finite horizon $[0,T]$:
\bel{1}\ba{ll}
\ds Y(t)=\psi(t) + \int_t^T f(t,s,Y(s),Z(t,s),Z(s,t)) ds\\
\ns\ds\qq\q\ + \int_t^T g(t,s,Y(s),Z(t,s),Z(s,t)) d\overleftarrow{B}(s)- \int_t^T Z(t,s) dW(s), \qq t\in [0,T],
\ea\ee
where  $f(\cd)$, $g(\cd)$ and $\psi(\cd)$ are some given maps, and the $dW$-integral is a forward It\^{o}'s integral and the $d\overleftarrow{B}$-integral is a backward It\^{o}'s integral. The above is called a {\it backward doubly stochastic Volterra integral equation} (BDSVIE, for short).
A pair of processes $(Y(\cd),Z(\cd,\cd))$, valued in $\mathbb{R}^{k}\times \mathbb{R}^{k\times d}$ with $Y(s)$ being $\mathcal{F}_{s}$-measurable and $Z(t,s)$ being $\mathcal{F}_{s}$-measurable for $(t,s)\in[0,T]^{2}$,
is called a {\it solution} to the BDSVIE \rf{1} if the equations are satisfied in the usual It\^o's sense.
Maps $f(\cd)$ and $g(\cd)$ are referred to as the {\it generator} of BDSVIE \rf{1}, and process $\psi(\cd)$ is referred to as the {\it free term}.

\ms

On the one hand, when $g(\cd)$ is absent, \rf{1} is reduced to the form
\begin{equation}\label{46}
 Y(t)=\psi(t) + \int_t^T f(t,s,Y(s),Z(t,s),Z(s,t)) ds  - \int_t^T Z(t,s) dW(s), \qq t\in [0,T],
\end{equation}
which is the {\it backward stochastic Volterra integral equation} (BSVIE, for short) introduced by Yong \cite{Yong2,Yong4} firstly,
motivated by the study of optimal control for forward stochastic Volterra integral equation (FSVIE, for short).
A special case of (\ref{46}) with $f(\cdot)$ independent of $Z(s,t)$ and $\psi(\cd)\equiv \xi$ was studied by Lin \cite{Lin} a little earlier, and followed by several other researchers: Djordjevic and Jankovic \cite{Djordjevic-Jankovic2013,Djordjevic-Jankovic2015}, Hu and Oksendal \cite{Hu-Oksendal2019}.
Since Yong \cite{Yong2,Yong4} introduced BSVIEs of the form \rf{46} (containing $Z(s,t)$), BSVIEs have attracted many researchers' interest.
For example, Anh, Grecksch and Yong \cite{Anh-Grecksch-Yong2010} investigated BSVIEs in Hilbert spaces;
Shi, Wang and Yong \cite{Shi2} studied well-posedness of mean-field BSVIEs;
Overbeck and R\"{o}der \cite{Overbeck-Roder2018} developed a theory of path-dependent BSVIEs;
The numerical aspect was considered by Bender and Pokalyuk \cite{Bender-Pokalyuk2013};
relevant optimal control problems were studied by Shi, Wang and Yong \cite{Shi4},
Agram, Oksendal and Yakhlef \cite{Agram-Oksendal-Yakhlef2016,Agram-Oksendal-Yakhlef2018},
Wang and Zhang \cite{Wang-Zhang2017}, and Wang \cite{Wang2019}; Wang and Yong \cite{Yong} established various comparison theorems for both adapted solutions and adapted M-solutions to BSVIEs in multi-dimensional Euclidean spaces;
Kromer and Overbeck \cite{Kromer-Overbeck2017} introduced a differentiability result for BSVIEs and apply this result to derived continuous-time dynamic capital allocations; Wang, Sun and Yong \cite{Wang-Sun-Yong2018} investigated the well-posedness of quadratic BSVIEs; Wang and Yong \cite{Wang-Yong2018} established a representation of adapted M-solutions to BSVIEs in terms of the solution to a system of (non-classical) partial differential equations and the solution to a (forward) stochastic differential equation, etc.

\ms

Comparing with BSVIE (\ref{46}), we notice that there are two independent Brownian motions $W(\cd)$ and $B(\cd)$ in BDSVIE (\ref{1}). The extra noise $B(\cd)$ in the equation can be seen as some extra information that cannot be detected in practice, such as in a derivative securities market, the inner information is only observable to some particular investors.
The coefficient $g(\cd)$ in the backward It\^{o}'s integral will arise some extra difficulties, due to that there is not any It\^{o}'s formula suitable to deal with BSVIEs so far.

\ms

On the other hand, when $f(\cd)$, $g(\cd)$ and $\psi(\cd)$ are independent of $t$, \rf{1} is reduced to the form
\bel{48}
Y(t)=\xi+\int_t^Tf(s,Y(s),Z(s))ds+\int_t^Tg(s,Y(s),Z(s))d\overleftarrow{B}_{s}-\int_t^TZ(s)dW(s), \qq t\in[0,T],
\ee
where $\xi$ is an $\mathcal{F}_{T}$-measurable square-integrable random variable. \rf{48} is the integral form of so-called {\it backward doubly stochastic differential equation} (BDSDE, for short) which were introduced by Pardoux and Peng \cite{Peng}, to relate a class of systems of quasilinear parabolic backward stochastic PDEs (SPDEs in short).
Since then, many researches have developed the theory and application of BDSDEs. For instance,
the generalized BDSDEs and SPDEs with nonlinear Neumann boundary conditions have been investigated by Boufoussi, Casteren and  Mrhardy \cite{Boufoussi}; Diehl and Friz \cite{Friz} established the connection between BSDEs with rough drivers and BDSDEs; Shi, Gu and Liu obtained the comparison theorem for BDSDEs; Xiong \cite{Xiong-13} obtained the strong uniqueness for the solution to a class of SPDEs; Han, Peng and Wu \cite{Han} investigated the optimal control problems for backward doubly stochastic control systems, and obtained the maximum principle for backward doubly stochastic control systems.
Wen and Shi \cite{Wen-Shi2019} established the relation between BDSDEs with random coefficients and quasilinear stochastic PDEs. A simple glance tells us that BDSVIE (\ref{1}) is a natural generalization of backward stochastic differential equations (BSDEs, for short). See Pardoux and Peng \cite{Peng1}, El Karoui, Peng and Quenez \cite{Peng2}, Ma and Yong \cite{Ma} (and references cited therein) for systematic discussions about BSDEs.

Comparing with BDSDE (\ref{48}), the main features of BDSVIE (\ref{1}) are as follows:
(i) the coefficients $f(\cd)$ and $g(\cd)$ depend on both $t$ and $s$, which implies that (\ref{1}) cannot generally be reduced to a BDSDE (or a BSDE); (ii) $f(\cd)$ and $g(\cd)$ depend on $Z(t,s)$ and $Z(s,t)$; (iii) the free term $\psi(\cd)$ is allowed to be only $\mathcal{B}([0,T])\otimes \mathcal{F}_{T}$-measurable (not necessarily $\mathcal{F}_{t}$-measurable), where $\mathcal{B}([0,T])$ is the Borel $\sigma$-field of $[0,T]$.

\ms

Besides the interest from the mathematical aspect, there are some interesting potential applications motivating the study of BDSVIE (\ref{1}), which has momentous applications in stochastic optimal control, (stochastic) PDEs, mathematical finance and risk management. Let us briefly mention some of them.

\ms

Stochastic optimal control: The classical stochastic optimal control problems mainly focus on (forward) stochastic differential equations (see Yong and Zhou \cite{Yong5}).
In reality, the state equations might involve in memories.
One way to describe such situations is to use stochastic Volterra integral equations (see Shi, Wang and Yong \cite{Shi2,Shi4}, Yong \cite{Yong4}). About this topic, see Section 5 for detail discussion, where
we will elaborate the application of BDSVIEs to stochastic optimal control problems of (forward) doubly stochastic Volterra integral equations (FDSVIEs, for short).

\ms

Stochastic PDEs: It is well-known that quasi-linear PDEs are related to Markovian BSDEs (see Peng \cite{Peng91}, and Pardoux and Peng \cite{Peng92}), what was called the nonlinear Feynman-Kac formula.
Along this way, quasi-linear stochastic PDEs are related to Markovian BDSDEs (see Pardoux and Peng \cite{Peng}), and a system of (non-classical) PDEs are connected to BSVIEs (see Wang and Yong \cite{Wang-Yong2018}). So, it is natural to believe that BDSVIEs could relate to (stochastic) PDEs. In fact, this topic is undergoing (see Shi, Wen and Xiong \cite{Shi-Wen-Xiong2019}).

\ms

Risk measurement: The (static) coherent risk measure was introduced by Artzner-Delbaen-Eber-Heath \cite{Artzner},
and the time-consistent coherent risk measure was introduced by
Peng \cite{Peng2004} and Gianin \cite{Gianin2006}.
When one wants to consider the dynamic version of coherent risk measure and allow possible time-inconsistent preference,
the theory of BSVIEs will play an important role. Some illustrative examples can be found in Yong \cite{Yong3}, and Wang, Sun and Yong \cite{Wang-Sun-Yong2018}.
Similar to BSVIEs, it is natural that BDSVIEs could also be used to describe some general dynamic risk measures. We will discuss this issue in the coming future.

\ms

To our best knowledge, there are few works concerning BDSVIEs.
In this paper, we establish a preliminary theory for BDSVIEs. The novelty mainly contains as follows.
First, well-posedness of BDSVIEs and FDSVIEs in the sense of introduced M-solutions (respectively) was established using the contraction mapping principle. It should be pointed out that the method used to prove the existence and uniqueness theorem (see Theorem \ref{22} below) is more convenient and simpler than the four steps method used in Yong \cite{Yong4}.
Second, as comparison theorem is an important tool and plays an important role in BDSVIEs, a kind of one-dimensional comparison theorem is established for BDSVIEs. As an application of the comparison theorem, we obtain the existence of solutions for BDSVIEs with continuous coefficients.
Finally, we consider the application of BDSVIEs in optimal control problem and prove a duality principle between linear FDSVIEs and BDSVIEs. Moreover, by virtue of the duality principle, a Pontryagin type maximum principle is established for an optimal control problem of FDSVIEs.

\ms

This paper is organized as follows.
In Section 2, we present some preliminary results and lemma which are useful in the sequel.
Section 3 is devoted to the study of the existence and uniqueness of BDSVIEs.
In Section 4, we give a comparison theorem for the solutions of BDSVIEs,
and the existence of solutions for BDSVIEs with continuous coefficients is also given in this section.
The duality principle and Pontryagin type maximum principle are proved in Section 5.

\section{Preliminaries}

Throughout this paper, and recall from the previous section, let $(\Om,\cF,\dbF,\dbP)$ be a complete filtered probability space with $\cF$ being defined in \rf{0}. Let $W=\{W(t); 0\les t<\i\}$ and $B=\{B(t); 0\les t<\i\}$  be two mutually independent standard Brownian motion processes, with values respectively in $\mathbb{R}^{d}$ and in $\mathbb{R}^{l}$, defined on $(\Om,\cF,\dbF,\dbP)$. As usual, we understand equalities and inequalities between random variables in the $\mathbb{P}$-almost sure sense. Let $T>0$ be a fixed terminal time, and $\D$ and $\D^c$ be defined by
$$\Delta=\{ (t,s)\in[0,T]^{2}| \ t\les s\},\qq\Delta^{c}=\{ (t,s)\in[0,T]^{2}| \ t>s\}.$$
The Euclidean norm of a vector $x\in\mathbb{R}^{k}$ will be denoted by $|x|$, and for a $k\times d$ matrix $A$, we define $\| A \|=\sqrt{Tr AA^{\ast}}$.  For any $t\in[0,T]$ and Euclidean space $\dbH$, we introduce the following spaces:
$$\ba{ll}
\ns\ds L^2_{\cF_t}(\Om;\dbH)=\Big\{\xi:\Om\to\dbH\bigm|\xi\hb{ is $\cF_t$-measurable, }\dbE|\xi|^2<\i\Big\},\\
\ns\ds L_{\mathcal{F}_{T}}^2(0,T;\dbH)=\Big \{\psi:\Omega\times [0,T]\rightarrow \dbH \bigm| \psi(\cd) \hb{ is $\mathcal{F}_{T}$-measurable, } \dbE\int_0^{T} |\psi(t)|^{2} dt < \infty\Big\},\\
\ns\ds S_{\mathcal{F}_{T}}^2(0,T;\dbH)=\Big \{\psi:\Omega\times [0,T]\rightarrow \dbH \bigm| \psi(\cd) \hb{ is $\mathcal{F}_{T}$-measurable, continuous, } \dbE\Big(\sup\limits_{0\leq t\leq T}|\psi(t)|^{2}\Big) < \infty\Big\},\\
\ns\ds L_\dbF^2(0,T;\dbH)\1n=\1n\Big\{Y:\Om\1n\times\1n[0,T]\to\dbH\bigm|Y(\cd)\hb{ is
$\dbF$-measurable, }\dbE\int^T_0\1n|Y(s)|^2ds\1n<\2n\i\Big\},\\
\ns\ds S_\dbF^2(0,T;\dbH)\1n=\1n\Big\{Y:\Om\1n\times\1n[0,T]\to\dbH\bigm|Y(\cd)\hb{ is
$\dbF$-measurable, continuous, }\dbE\Big(\sup\limits_{0\leq t\leq T}|Y(t)|^{2}\Big)\1n<\2n\i\Big\},\\
\ns\ds L_{\dbF}^2(\Delta;\dbH)
  =\Big\{Z:\Omega\times \Delta \rightarrow \dbH  \bigm| \hb{For } t\in[0,T], \ Z(t,\cd) \hb{ is $\dbF$-measurable, }\\
\ns\ds\qq\qq\qq\qq\qq\qq\q~ \ \qq\q \ \
\dbE\int_0^{T} \int_t^{T} |Z(t,s)|^{2} dsdt< \infty\Big\},\\
\ns\ds L_{\dbF}^2([0,T]^2;\dbH)
  =\Big\{Z:\Omega\times [0,T]^2 \rightarrow \dbH  \bigm| \hb{For } t\in[0,T], \ Z(t,\cd) \hb{ is $\dbF$-measurable, }\\
\ns\ds\qq\qq\qq\qq\qq\qq\q~ \ \qq\q \ \
\dbE\int_0^{T} \int_0^{T} |Z(t,s)|^{2} dsdt< \infty\Big\}.\\
\ns\ns\ds \mathcal{H}_{\Delta}^{2}[0,T]=L_{\dbF}^2(0,T;\dbH)\times L_{\dbF}^2(\Delta;\dbH);\qq
\mathcal{H}^{2}[0,T]=L_{\dbF}^2(0,T;\dbH)\times L_{\dbF}^2([0,T]^{2};\dbH).
\ea$$
Note that $\mathbb{F}=\{\mathcal{F}_{t},t\in[0,T]\}$ (see \rf{0}) is not a filtration, so if a process $Y(\cd)$ belongs to $L_{\dbF}^2(0,T;\dbH)$, we could say that $Y(\cd)$ is $\dbF$-measurable, but not $\dbF$-adapted. Similarly, we say  a pair of processes is a solution of BDSVIEs, not an adapted solution of BDSVIEs.
\begin{definition} \rm
A pair of processes $(Y(\cdot),Z(\cdot,\cdot))\in \mathcal{H}^{2}[0,T]$ is called a solution of BDSVIE
\rf{1} if \rf{1} is satisfied in the usual It\^{o} sense for Lebesgue measure almost every $t\in[0,T]$.
\end{definition}
\begin{itemize}
  \item [$\mathbf{(H1)}$]
Let
$f:\Omega\times [0,T]\times \mathbb{R}^{k}\times \mathbb{R}^{k\times d}\rightarrow \mathbb{R}^{k},$ and
$g:\Omega\times [0,T]\times \mathbb{R}^{k}\times \mathbb{R}^{k\times d}\rightarrow \mathbb{R}^{k\times l}$
be jointly measurable such that for any $(y,z)\in \mathbb{R}^{k}\times \mathbb{R}^{k\times d}$,
$f(\cdot,y,z)\in L_{\dbF}^{2}(0,T; \mathbb{R}^{k})$ and $g(\cdot,y,z)\in L_{\dbF}^{2}(0,T; \mathbb{R}^{k\times l})$.
 Moreover, there exist some constants $c>0$ and $0<\alpha<1$ such that
 for any $t\in [0,T],(y,z),(y',z')\in \mathbb{R}^{k}\times \mathbb{R}^{k\times d}$,
\begin{equation*}
 \begin{cases}
   |f(t,y,z)-f(t,y',z')|^{2}\les c(|y-y'|^{2} + \|z-z'\|^{2});\\
   \|g(t,y,z)-g(t,y',z')\|^{2}\les c|y-y'|^{2} + \alpha \|z-z'\|^{2}.\\
 \end{cases}
\end{equation*}
\end{itemize}

The following results concerning on BSDEs and BDSDEs are by now well known, for their proofs the reader is referred to Pardoux and Peng \cite{Peng},
 Shi et al. \cite{Shi}, and Lepeltier and Martin \cite{San}.
In detail, our Proposition \ref{3} is Proposition 1.2 of \cite{Peng},
 Proposition \ref{4} is Theorem 3.1 of \cite{Shi}, and Proposition \ref{40} is Lemma 1 of \cite{San}.

\begin{proposition}\label{3}\rm
Let (H1) hold, then for any $\xi\in L^2_{\mathcal{F}_{T}}(\Om;\mathbb{R}^{k})$, BDSDE (\ref{48}) admits a unique solution
$(Y,Z)\in S_{\dbF}^{2}(0,T; \mathbb{R}^{k})\times L_{\dbF}^{2}(0,T; \mathbb{R}^{k\times d}).$
\end{proposition}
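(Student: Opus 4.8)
The plan is to prove the well-posedness of BDSDE \eqref{48} under (H1) by the standard Picard-type contraction argument, carried out on a suitably chosen norm on the space $S_{\dbF}^{2}(0,T;\mathbb{R}^{k})\times L_{\dbF}^{2}(0,T;\mathbb{R}^{k\times d})$. First I would set up the solution map: given $(y,z)$ in this space, define $(Y,Z)$ to be the unique solution of the \emph{linear} BDSDE obtained by freezing $(y,z)$ in the generators, i.e.
\begin{equation*}
Y(t)=\xi+\int_t^T f(s,y(s),z(s))\,ds+\int_t^T g(s,y(s),z(s))\,d\overleftarrow{B}(s)-\int_t^T Z(s)\,dW(s).
\end{equation*}
Existence and uniqueness of $(Y,Z)$ here is classical: $Y(t)=\dbE\big[\xi+\int_t^T f(s,y,z)ds+\int_t^T g(s,y,z)d\overleftarrow B(s)\,\big|\,\cF_t\big]$, with $Z$ extracted via a martingale representation adapted to the doubly-stochastic filtration. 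One must check the ``mixed'' conditioning/integrability works because of the backward $B$-integral; this is exactly the content of the Pardoux--Peng framework, so I would simply cite \cite{Peng} for this linear step.

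Next I would estimate the difference of two images. Let $(Y^i,Z^i)$ be the images of $(y^i,z^i)$, $i=1,2$, and write $\widehat Y=Y^1-Y^2$, etc. The key tool is Itô's formula for $e^{\beta t}|\widehat Y(t)|^2$ applied to the doubly stochastic equation; the backward $B$-integral contributes a term $+\int_t^T e^{\beta s}\|\widehat g(s)\|^2 ds$ (with the opposite sign relative to the forward $W$-integral's $-\int_t^T e^{\beta s}\|\widehat Z(s)\|^2 ds$), which is the hallmark of BDSDEs. Taking expectations and using the Lipschitz bounds from (H1) — crucially $\|g(t,y,z)-g(t,y',z')\|^2\le c|y-y'|^2+\alpha\|z-z'\|^2$ with $\alpha<1$ — I would arrive at an inequality of the shape
\begin{equation*}
\dbE\!\int_0^T\! e^{\beta t}|\widehat Y(t)|^2 dt+\dbE\!\int_0^T\! e^{\beta t}\|\widehat Z(t)\|^2 dt\le \frac{C}{\beta}\,\dbE\!\int_0^T\! e^{\beta t}|\widehat y(t)|^2 dt+\big(\alpha+\tfrac{C}{\beta}\big)\dbE\!\int_0^T\! e^{\beta t}\|\widehat z(t)\|^2 dt,
\end{equation*}
possibly after an elementary rearrangement absorbing an $\e\|\widehat Z\|^2$ cross term. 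Choosing $\beta$ large makes the contraction constant strictly less than $1$ on $L^2_\dbF(0,T;\mathbb{R}^k)\times L^2_\dbF(0,T;\mathbb{R}^{k\times d})$ with the equivalent weighted norm, so Banach's fixed point theorem yields a unique fixed point $(Y,Z)$, which is the solution.

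Finally I would upgrade the regularity of $Y$: since $Y$ solves a linear BDSDE with data already known to be in $L^2_\dbF$, Burkholder--Davis--Gundy applied to both stochastic integrals gives $\dbE\big(\sup_{0\le t\le T}|Y(t)|^2\big)<\i$ and path continuity, so in fact $(Y,Z)\in S_{\dbF}^{2}(0,T;\mathbb{R}^k)\times L_{\dbF}^{2}(0,T;\mathbb{R}^{k\times d})$. The main obstacle — and the only genuinely non-routine point — is handling the backward Itô integral correctly in the Itô expansion: one must be careful that the $d\overleftarrow B$ term produces a $+\|\widehat g\|^2$ contribution (not $-$) and that no spurious cross-variation between $W$ and $B$ appears, using their independence; everything else is the familiar a priori estimate plus fixed-point machinery. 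Since the statement is quoted verbatim as Proposition~1.2 of \cite{Peng}, in the paper itself the honest ``proof'' is just the citation, but the sketch above is how one reconstructs it.
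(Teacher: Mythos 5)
The paper gives no proof of this proposition at all: it is imported verbatim as Proposition~1.2 of Pardoux--Peng \cite{Peng}, exactly as you note in your final sentence, so your proposal already matches the paper's ``proof'' (a citation) and goes beyond it by reconstructing the standard argument, whose architecture (linear step via martingale representation, It\^o expansion of $e^{\beta t}|\widehat Y(t)|^2$ with the backward integral contributing $+\|\widehat g\|^2$, absorption of $\alpha\|\widehat z\|^2$ using $\alpha<1$, Banach fixed point) is correct. One small caveat on your displayed contraction estimate: the coefficient of $\dbE\int e^{\beta t}|\widehat y(t)|^2\,dt$ cannot be made $C/\beta$, because the term $\|\widehat g\|^2\les c|\widehat y|^2+\alpha\|\widehat z\|^2$ feeds $c\,\dbE\int e^{\beta s}|\widehat y|^2\,ds$ into the bound for $\dbE\int e^{\beta s}\|\widehat Z\|^2\,ds$ with no $1/\beta$ damping, so one must either put unequal weights on the $Y$- and $Z$-components of the norm (using that the It\^o identity controls $\beta\,\dbE\int e^{\beta s}|\widehat Y|^2\,ds$ on the left) or iterate over small time intervals --- a standard repair that does not affect the conclusion.
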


\begin{proposition}\label{4}\rm
For $i=1,2,$ given $\xi^i\in L^2_{\mathcal{F}_{T}}(\Om;\mathbb{R}^{k})$ and suppose that $f^i(\cd)$ and $g(\cd)$ satisfy (H1). Let $(Y^{i}(\cd),Z^{i}(\cd,\cd))$ be the solutions of the following equations, respectively for $i=1,2$,
\begin{equation*}
   Y^{i}(t)=\xi^{i} + \int_t^T f^{i}(s,Y^{i}(s),Z^{i}(s)) ds
            + \int_t^T g(s,Y^{i}(s),Z^{i}(s)) d\overleftarrow{B}(s)- \int_t^T Z^{i}(s) dW(s),\qq t\in[0,T].
\end{equation*}
Then, if $\xi^{1}\ges \xi^{2}$ and $f^{1}(t,y,z)\ges f^{2}(t,y,z)$ for every $(t,y,z)\in[0,T]\times \mathbb{R}^{k}\times \mathbb{R}^{k\times d}$, we have
   \begin{equation*}
   Y^{1}(t)\ges Y^{2}(t), \qq t\in[0,T].
   \end{equation*}
\end{proposition}

\begin{proposition}\label{40}\rm
Let $f : \mathbb{R}^{k} \rightarrow \mathbb{R}$ be a continuous function with linear growth, i.e., there exists a positive constant $M$ such that for all $x\in \mathbb{R}^{k}$, $|f(x)|\les M(1+ |x|)$. Then the sequence of functions
\begin{equation*}
  f_{n}(x)\deq\inf_{y\in \mathbb{Q}} \{f(y)+n|x-y|\},
\end{equation*}
is well defined for $n\ges M$, and it satisfies
\begin{itemize}
  \item [(i)]   linear growth: $\forall x\in \mathbb{R}^{k}$, $|f_{n}(x)|\leq M(1+ |x|)$;
  \item [(ii)]  monotonicity in $n$: $\forall x\in \mathbb{R}^{k}$, $f_{n}(x) \nearrow$;
  \item [(iii)] Lipschitz condition: $\forall x,y\in \mathbb{R}^{k}$, $|f_{n}(x)-f_{n}(y)|\leq M|x-y|$;
  \item [(iv)]  strong convergence: if $x_{n} \xrightarrow{ \ n \ } x $, then  $f_{n}(x_{n})\xrightarrow{ \ n \ } f_{n}(x)$.
\end{itemize}
\end{proposition}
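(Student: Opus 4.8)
The plan is to establish the four properties directly from the infimal-convolution formula $f_n(x)=\inf_{y\in\mathbb{Q}}\{f(y)+n|x-y|\}$ (with $\mathbb{Q}$ understood as a countable dense subset of $\mathbb{R}^{k}$), using only the linear growth bound and the continuity of $f$. The observation that drives everything is a uniform lower bound: since $|f(y)|\le M(1+|y|)$ and $|y|\le|x|+|x-y|$, one has $f(y)+n|x-y|\ge -M(1+|x|)+(n-M)|x-y|\ge -M(1+|x|)$ for every $y$ whenever $n\ge M$. Hence the infimum defining $f_n(x)$ is finite, so $f_n$ is well defined, and moreover $f_n(x)\ge -M(1+|x|)$. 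On the other hand, taking a sequence of points of $\mathbb{Q}$ tending to $x$ and using continuity of $f$ shows $f_n(x)\le f(x)$. Combining the two bounds gives $|f_n(x)|\le M(1+|x|)$, which is (i).

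Property (ii) is immediate from the formula: if $n\le m$ then $f(y)+n|x-y|\le f(y)+m|x-y|$ for every $y$, so passing to the infimum yields $f_n(x)\le f_m(x)$. For the Lipschitz property (iii), I would use the triangle inequality $|x-y|\le|x'-y|+|x-x'|$ to write $f(y)+n|x-y|\le\big(f(y)+n|x'-y|\big)+n|x-x'|$ for each $y\in\mathbb{Q}$; taking the infimum over $y$ gives $f_n(x)\le f_n(x')+n|x-x'|$, and interchanging $x$ and $x'$ gives $|f_n(x)-f_n(x')|\le n|x-x'|$, so each $f_n$ is globally Lipschitz (with constant $n$).

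The substantive step is the convergence statement (iv): for $x_n\to x$ one should show $f_n(x_n)\to f(x)$, and I would argue by bounding the limit superior and the limit inferior separately. The upper bound is easy: $f_n(x_n)\le f(x_n)\to f(x)$ by continuity, so $\limsup_n f_n(x_n)\le f(x)$. For the lower bound, choose near-minimizers $y_n\in\mathbb{Q}$ with $f(y_n)+n|x_n-y_n|\le f_n(x_n)+\tfrac1n$; since $f_n(x_n)\le f(x_n)$ and $\{x_n\}$ is bounded, the right-hand side is bounded above by a constant, and feeding this into the uniform lower bound shows that $(n-M)|x_n-y_n|$ stays bounded, hence $|x_n-y_n|\to 0$ and therefore $y_n\to x$. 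Continuity then gives $f(y_n)\to f(x)$, and $f_n(x_n)\ge f(y_n)+n|x_n-y_n|-\tfrac1n\ge f(y_n)-\tfrac1n\to f(x)$, so $\liminf_n f_n(x_n)\ge f(x)$. I expect this last step — deducing from the linear growth control that the near-minimizers $y_n$ converge to $x$ — to be the only place requiring real care; everything else follows mechanically from the structure of $f_n$.
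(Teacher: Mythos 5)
Your proof is correct, and it is essentially the classical argument: the paper itself gives no proof of this proposition but simply cites it as Lemma~1 of Lepeltier and San~Martin \cite{San}, and your four steps (the uniform lower bound $f(y)+n|x-y|\ge -M(1+|x|)+(n-M)|x-y|$, the upper bound $f_n\le f$ via density and continuity, the inf-convolution Lipschitz estimate, and the near-minimizer argument forcing $|x_n-y_n|\to 0$) reproduce that standard proof.

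One point worth flagging: what you prove differs from the statement as printed in the paper in two places, and in both places you are right and the printed statement is a misprint. In (iii) the Lipschitz constant should be $n$, not $M$ --- as your argument shows, the inf-convolution with $n|\cdot|$ yields an $n$-Lipschitz function, and constant $M$ is false in general (take $f$ bounded by $M$ but highly oscillatory, e.g.\ $f(x)=M\sin(Lx)$ with $L$ large; since $f_n\uparrow f$ pointwise, the $f_n$ cannot all be $M$-Lipschitz, as that would force $f$ to be $M$-Lipschitz). In (iv) the conclusion should read $f_n(x_n)\to f(x)$, which is exactly what you establish via the $\limsup$/$\liminf$ bounds; the printed $f_n(x_n)\to f_n(x)$ does not parse as a convergence statement. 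Your identification of the lower bound in (iv) --- showing the near-minimizers $y_n$ collapse onto $x$ because $(n-M)|x_n-y_n|$ stays bounded --- as the only delicate step is accurate.
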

At the end of this section, we present a lemma concerning a simple BDSVIE, which is useful in the sequel.
The method used to prove the following lemma, similar to Shi and Wang \cite{Shi1}, is inspired by the method of estimating the adapted solutions of BSDEs in El Karoui and Huang \cite{El}.
\begin{lemma}\label{19}\rm
Suppose that $\psi(\cd)\in L_{\mathcal{F}_{T}}^2(0,T;\mathbb{R}^{k})$,
$f(\cd)\in L_{\dbF}^2(\Delta;\mathbb{R}^{k})$ and $g(\cd)\in L_{\dbF}^2(\Delta;\mathbb{R}^{k\times l})$.
Then BDSVIE:
\begin{equation}\label{2}
   Y(t)=\psi(t)+\int_t^Tf(t,s)ds+\int_t^Tg(t,s)d\overleftarrow{B}(s)-\int_t^TZ(t,s)dW(s), \qq t\in [0,T],
\end{equation}
has a unique solution $(Y(\cdot),Z(\cdot,\cdot))$ $\in \mathcal{H}_{\Delta}^{2}[0,T]$, and for some constant $\b>0,$ the following estimate holds,
\bel{20}\ba{ll}
\ds\dbE\int_0^T e^{\beta t} |Y(t)|^{2} dt  +  \dbE\int_0^T\int_t^T  e^{\beta s} \|Z(t,s)\|^{2} dsdt \\
\ns\ds\les4\dbE\int_0^Te^{\beta t}|\psi(t)|^{2}dt+\frac{10}{\beta}\dbE\int_0^T\int_t^T e^{\beta s}|f(t,s)|^{2}dsdt \\
\ns\ds\q+ \dbE\int_0^T e^{\beta t}\int_t^T \|g(t,s)\|^{2} ds dt+\dbE\int_0^T \int_t^T e^{\beta s}\|g(t,s)\|^{2}dsdt.
\ea\ee
\end{lemma}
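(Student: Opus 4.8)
The plan is to solve \rf{2} by first treating the $d\overleftarrow B$-integral as a known inhomogeneity. For each fixed $t\in[0,T]$, observe that the triple $\big(\psi(t),\,\int_t^\cdot f(t,s)\,ds,\,\int_t^\cdot g(t,s)\,d\overleftarrow B(s)\big)$ is completely explicit, so \rf{2} is not really an equation for $Y(t)$ in terms of itself: define
\[
\eta(t)\deq \psi(t)+\int_t^T f(t,s)\,ds+\int_t^T g(t,s)\,d\overleftarrow B(s),
\]
which by the hypotheses on $\psi,f,g$ lies in $L^2_{\cF_T}(\Om;\dbR^k)$ (using the $L^2$-isometry for the backward It\^o integral and Cauchy--Schwarz / Fubini for the $ds$-integral). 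Then \rf{2} reads $Y(t)=\eta(t)-\int_t^T Z(t,s)\,dW(s)$, i.e. for each fixed $t$ we must produce an $\cF_\cdot$-adapted (in the $W$-sense, $B$-conditionally) representation of $\eta(t)$. First I would fix $t$ and apply the martingale representation theorem on the filtration $\cG_s\deq\cF_s^W\vee\cF_T^B$ (under which $W$ is a martingale and $\eta(t)$ is $\cG_T$-measurable): writing $M_s\deq\dbE[\eta(t)\mid\cG_s]$ we get $M_s=\dbE[\eta(t)\mid\cG_0]+\int_0^s Z(t,r)\,dW(r)$ for a unique $Z(t,\cdot)\in L^2_\dbF(0,T;\dbR^{k\times d})$, and then set
\[
Y(t)\deq M_t=\eta(t)-\int_t^T Z(t,s)\,dW(s).
\]
Uniqueness is immediate from the same representation theorem. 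One checks measurability of $(t,s)\mapsto Z(t,s)$ jointly (e.g. via a version obtained from an $L^2(dt)$-limit of the explicit construction, or by the standard argument that the map $\eta\mapsto$ its integrand is linear bounded $L^2_{\cF_T}\to L^2_\dbF(0,T)$ and hence measurable in the parameter $t$).

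The bulk of the work is the a priori estimate \rf{20}. For fixed $t$ I would apply the It\^o formula to $s\mapsto e^{\beta s}|M_s|^2$ on $[t,T]$ — this is an ordinary (not doubly stochastic) It\^o formula since for fixed $t$, $M_s=\eta(t)-\int_s^T Z(t,r)\,dW(r)$ solves a linear BSDE driven only by $W$, with terminal value $\eta(t)$ and generator zero but with the free term carrying $f$ and $g$ hidden inside $\eta(t)$. More cleanly: write $Y(s;t)\deq\dbE\big[\psi(t)+\int_s^T f(t,r)\,dr+\int_s^T g(t,r)\,d\overleftarrow B(r)\,\big|\,\cG_s\big]$ so that, for fixed $t$,
\[
Y(s;t)=\psi(t)+\int_s^T f(t,r)\,dr+\int_s^T g(t,r)\,d\overleftarrow B(r)-\int_s^T Z(t,r)\,dW(r),\qquad s\in[t,T],
\]
with $Y(t;t)=Y(t)$. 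Applying It\^o to $e^{\beta s}|Y(s;t)|^2$, taking expectations (the $dW$ and $d\overleftarrow B$ stochastic integrals have zero mean), one gets
\[
e^{\beta t}\dbE|Y(t)|^2+\beta\dbE\!\int_t^T\! e^{\beta s}|Y(s;t)|^2ds+\dbE\!\int_t^T\! e^{\beta s}\|Z(t,s)\|^2ds
=e^{\beta T}\dbE|\psi(t)|^2+2\dbE\!\int_t^T\! e^{\beta s}\langle Y(s;t),f(t,s)\rangle ds+\dbE\!\int_t^T\! e^{\beta s}\|g(t,s)\|^2ds,
\]
where the $\|g\|^2$ term comes from the quadratic variation of the backward integral. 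The cross term is handled by Young's inequality $2\langle Y,f\rangle\le \beta|Y|^2+\tfrac1\beta|f|^2$ — wait, that absorbs into the $\beta$-term but then leaves $\dbE\int_t^T e^{\beta s}|Y(s;t)|^2ds$ only partially controlled; instead I would split $2\langle Y,f\rangle\le \tfrac{\beta}{?}|Y|^2+\tfrac{?}{\beta}|f|^2$ choosing constants so that a positive multiple of $\dbE\int e^{\beta s}|Y(s;t)|^2$ survives on the left, which combined with $e^{\beta t}\dbE|Y(t)|^2$ and after integrating $dt$ over $[0,T]$ and using $e^{\beta T}\le$ (absorbed) — actually the clean route is to keep $e^{\beta t}\dbE|Y(t)|^2$ and $\dbE\int_t^T e^{\beta s}\|Z(t,s)\|^2$ on the left and bound $e^{\beta T}\dbE|\psi(t)|^2$ by noting we only need $e^{\beta t}$ on the RHS after a further estimate, OR (simpler, matching the stated constants $4,\tfrac{10}\beta,1,1$) apply It\^o instead to $e^{\beta s}|Y(s;t)-\psi(t)|^2$-type quantities, or run the Gronwall/Young bookkeeping twice. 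The precise constants $4$ and $\tfrac{10}{\beta}$ and the appearance of both $\dbE\int_0^Te^{\beta t}\int_t^T\|g\|^2\,ds\,dt$ and $\dbE\int_0^T\int_t^Te^{\beta s}\|g\|^2\,ds\,dt$ in \rf{20} strongly suggest one bounds $|Y(t)|^2\le 4\big(|\psi(t)|^2+|\int_t^T f|^2+|\int_t^T g\,d\overleftarrow B|^2+|\int_t^T Z\,dW|^2\big)$ directly, estimates each piece (Cauchy--Schwarz giving $\int_t^T f$ a factor $T$, which one then converts to $\tfrac1\beta$ via the weight, and It\^o isometry giving the two $g$-terms with weights $e^{\beta t}$ and $e^{\beta s}$ respectively from the crude bounds $e^{\beta s}\le e^{\beta T}$ on one copy), and separately bounds $\dbE\int_0^T\int_t^T e^{\beta s}\|Z(t,s)\|^2\,ds\,dt$ from the It\^o-formula identity. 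So the recipe is: (a) the energy identity from It\^o applied for each $t$; (b) Young on the cross term with a split that leaves room; (c) integrate $dt$; (d) combine with the crude pointwise bound on $\dbE|Y(t)|^2$ to distribute the constants as in \rf{20}; the two different weightings of the $\|g\|^2$ terms track the two places $g$ enters (once through the $d\overleftarrow B$ quadratic variation at "time $s$", once through the terminal-type contribution at "time $t$").

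The main obstacle, such as it is, is the careful bookkeeping of constants and exponential weights so as to land exactly on \rf{20}; conceptually nothing is hard because, for each frozen $t$, BDSVIE \rf{2} is just a linear BDSDE with a random terminal value and a prescribed (not-to-be-solved-for) drift and diffusion, so existence/uniqueness reduces to Proposition \rf{3} (or directly to martingale representation) and the estimate reduces to a single application of It\^o's formula followed by Young's inequality and Fubini. The only genuinely technical point beyond the constants is the joint $(t,s)$-measurability of $Z$, which I would get from the fact that $t\mapsto \eta(t)$ is (a version that is) jointly measurable and the representation map is a bounded linear operator, so one can choose a jointly measurable version of $Z$; then $Y(\cdot)\in L^2_\dbF(0,T;\dbR^k)$ and $Z\in L^2_\dbF(\Delta;\dbR^{k\times d})$, i.e. $(Y,Z)\in\cH_\Delta^2[0,T]$, follow from \rf{20} itself.
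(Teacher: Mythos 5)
Your proposal is correct in its overall architecture and that architecture coincides with the paper's: freeze $t$, solve the zero-generator parameterized BDSDE $\lambda(t,r)=\psi(t)+\int_r^Tf(t,s)ds+\int_r^Tg(t,s)d\overleftarrow B(s)-\int_r^T\mu(t,s)dW(s)$ (equivalently, take $\dbE[\eta(t)\mid\cG_r]$ and invoke martingale representation), set $Y(t)=\lambda(t,t)$, $Z(t,s)=\mu(t,s)$, and then integrate the resulting one-parameter estimates in $t$. Where you genuinely diverge is in how the a priori estimate is produced. You apply It\^o's formula to $e^{\beta s}|\lambda(t,s)|^2$ for each fixed $t$; this is legitimate (the obstruction the paper alludes to concerns only the diagonal $s\mapsto\lambda(s,s)$, not the off-diagonal process), but note it is the Pardoux--Peng It\^o formula for doubly stochastic It\^o processes, not an ``ordinary'' one -- the $\|g\|^2$ term is exactly the backward-integral correction in that formula -- so your parenthetical ``(not doubly stochastic)'' is wrong even though the identity you write down is right. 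The paper instead avoids It\^o entirely: its key identity is the orthogonality relation $\dbE|Y(t)|^2+\dbE\int_t^T\|Z(t,s)\|^2ds=\dbE\big|\psi(t)+\int_t^Tf(t,s)ds+\int_t^Tg(t,s)d\overleftarrow B(s)\big|^2$, obtained by conditioning out the $dW$-integral, combined with the weighted Cauchy--Schwarz bound \eqref{6} and the deterministic integration-by-parts identities \eqref{7}, \eqref{8}, \eqref{10}, \eqref{12} in the style of El Karoui--Huang; adding \eqref{15} and \eqref{14} through \eqref{12} is precisely what produces the constants $4$, $\tfrac{10}{\beta}$ and the two differently weighted $g$-terms in \eqref{20}. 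Your It\^o--Young route, carried to completion, yields $\dbE\int_0^Te^{\beta t}|Y(t)|^2dt+\dbE\int_0^T\int_t^Te^{\beta s}\|Z(t,s)\|^2dsdt\les e^{\beta T}\dbE\int_0^T|\psi(t)|^2dt+\tfrac1\beta\dbE\int_0^T\int_t^Te^{\beta s}|f(t,s)|^2dsdt+\dbE\int_0^T\int_t^Te^{\beta s}\|g(t,s)\|^2dsdt$, which is of the required type (the $f$-coefficient is $O(1/\beta)$ and the $g$-coefficient is $O(1)$, which is all that the contraction argument in Theorem \ref{22} and the Cauchy estimates in Theorem \ref{43} ever use, and is in fact slightly sharper in the $g$-term), but it is not literally \eqref{20}; to land on the stated inequality you would have to abandon It\^o and run the paper's two separate computations, which is essentially the ``crude pointwise bound'' alternative you sketch but do not execute. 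In short: existence and uniqueness are handled the same way, the estimate is obtained by a different but valid tool, and the only caveats are the mislabelled It\^o formula and the fact that your constants do not match the ones asserted in the lemma.
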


\begin{proof}
We introduce the following family of BDSDEs (parameterized by $t\in [0,T]$):
\begin{equation}\label{5}
   \lambda(t,r)=\psi(t) + \int_r^T f(t,s) ds + \int_r^T g(t,s) d\overleftarrow{B}(s) - \int_r^T \mu(t,s) dW(s), \qq r\in [t,T].
\end{equation}
From Proposition \ref{3}, Eq. (\ref{5}) admits a unique solution
$(\lambda(t,\cdot),\mu(t,\cdot))$ on $[t,T]$,  for every $t\in [0,T]$.
Let
\begin{equation*}
Y(t)=\lambda(t,t), \qq Z(t,s)=\mu(t,s), \qq 0\leq t\leq s\leq T.
\end{equation*}
Then $(Y(\cdot),Z(\cdot,\cdot))$ is a solution to (\ref{2}), which implies the existence of BDSVIE \rf{2}.
From (\ref{5}), we have
\bel{42}
\lambda(t,r)+\int_r^T \mu(t,s) dW(s)=\psi(t) + \int_r^T f(t,s) ds + \int_r^T g(t,s) d\overleftarrow{B}(s), \qq r\in [t,T].
\ee
Especially, when $r=t$, we obtain that
\begin{equation}\label{57}
 Y(t)+ \int_t^T Z(t,s) dW(s)=\psi(t) + \int_t^T f(t,s) ds + \int_t^T g(t,s) d\overleftarrow{B}(s),\qq t\in[0,T].
\end{equation}
Before proving the uniqueness of BDSVIE \rf{2}, we estimate
\begin{equation*}
   \dbE\int_0^T e^{\beta t} |Y(t)|^{2} dt + \dbE\int_0^T \int_t^T e^{\beta s} \|Z(t,s)\|^{2} dsdt.
\end{equation*}
By Cauchy-Schwarz inequality, it follows that
\begin{align}
 \bigg|\int_s^T f(t,u) du\bigg|^{2}
  =& \bigg|\int_s^T e^{\frac{-\gamma u}{2}} e^{\frac{\gamma u}{2}} f(t,u) du\bigg|^{2}  \nonumber\\
  \les& \int_s^T e^{-\gamma u}  du \cdot \int_s^T e^{\gamma u} |f(t,u)|^{2} du
  \les \frac{1}{\gamma} e^{-\gamma s} \int_s^T e^{\gamma u} |f(t,u)|^{2} du, \qq s\in[t,T],  \label{6}
\end{align}
where $\gamma=\frac{\beta}{2}$ or $\beta$. By taking $\gamma=\frac{\beta}{2}$ in (\ref{6}), we see that
\begin{equation*}
\begin{split}
    \int_t^T \beta e^{\beta s}  \bigg|\int_s^T f(t,u) du\bigg|^{2} ds
\les& \frac{4}{\beta} \int_t^T \frac{\beta}{2} e^{\frac{\beta}{2} s}
        \cdot\bigg(\int_s^T e^{\frac{\beta}{2} u} |f(t,u)|^{2} du\bigg) ds\\
\les& \bigg(\frac{4}{\beta}e^{\frac{\beta}{2} s} \int_s^T e^{\frac{\beta}{2} u} |f(t,u)|^{2} du\bigg) \bigg|_{t}^{T}
        +\frac{4}{\beta} \int_t^T e^{\beta s} |f(t,s)|^{2} ds\\
\les& \frac{4}{\beta} \int_t^T e^{\beta s} |f(t,s)|^{2} ds.
\end{split}
\end{equation*}
Therefore
\begin{equation}\label{7}
  \dbE\int_0^T \int_t^T \beta e^{\beta s} \bigg|\int_s^T f(t,u) du\bigg|^{2} dsdt
  \les \frac{4}{\beta}  \dbE\int_0^T \int_t^T e^{\beta s} |f(t,s)|^{2} dsdt.
\end{equation}
We also obtain the following result by taking $s=t$ and $\gamma=\beta$ in (\ref{6}),
\begin{equation}\label{8}
  \dbE\int_0^T e^{\beta t} \bigg|\int_t^T f(t,s) ds\bigg|^{2} dt
  \les \frac{1}{\beta}  \dbE\int_0^T \int_t^T e^{\beta s} |f(t,s)|^{2} dsdt.
\end{equation}
For the generator $g(\cd)$, since
\begin{equation*}
\begin{split}
       \dbE\int_t^T \beta e^{\beta s}\bigg(\int_s^T \|g(t,u)\|^{2} du\bigg) ds
     =& \ \dbE\bigg( e^{\beta s} \int_s^T \|g(t,u)\|^{2}du \bigg) \bigg|_{t}^{T}
     + \dbE \int_t^T e^{\beta s} \|g(t,s)\|^{2} ds\\
  \les& \ \dbE \int_t^T e^{\beta s} \|g(t,s)\|^{2} ds,
\end{split}
\end{equation*}
we obtain
\begin{equation}\label{10}
  \dbE\int_0^T \int_t^T \beta e^{\beta s}\bigg(\int_s^T \|g(t,u)\|^{2} du\bigg) dsdt
  \les  \dbE\int_0^T \int_t^T e^{\beta s} \|g(t,s)\|^{2} dsdt.
\end{equation}
For the solution $Z(\cd,\cd)$, it's easy to see that
\begin{equation}\label{11}
    \int_r^T \beta e^{\beta s}\bigg(\int_s^T \|Z(t,u)\|^{2} du\bigg) ds
   =\bigg(e^{\beta s}\int_s^T \|Z(t,u)\|^{2} du\bigg)\bigg|_{r}^{T} + \int_r^T  e^{\beta s} \|Z(t,s)\|^{2} ds, \q r\in [t,T].
\end{equation}
For every $ t\in [0,T]$, we can rewrite (\ref{11}) after taking $r=t$,
\bel{12}\dbE\int_0^T \int_t^T  e^{\beta s}\|Z(t,s)\|^{2} dsdt  =\dbE\int_0^T\int_t^T \beta e^{\beta s}\bigg(\int_s^T \|Z(t,u)\|^{2} du\bigg) dsdt
   + \dbE\int_0^T e^{\beta t}\int_t^T \|Z(t,u)\|^{2} du dt.  \ee
Notice $\psi(t)$ is $\mathcal{F}_{T}$-measurable, using the property of conditional expectation,
it follows from (\ref{57}) that
\begin{equation*}
\begin{split}
     \dbE|Y(t)|^{2}+ \dbE\int_t^T \|Z(t,s)\|^{2} ds
   =&\ \dbE\bigg(\psi(t) + \int_t^T f(t,s) ds + \int_t^T g(t,s) d\overleftarrow{B}_{s}\bigg)^{2}\\
\les&\ \dbE\bigg(2|\psi(t)|^{2} + 2\bigg|\int_t^T f(t,s) ds\bigg|^{2} + \int_t^T \|g(t,s)\|^{2} ds\bigg).
\end{split}
\end{equation*}
Then, note that (\ref{8}), we have
\bel{15}\ba{ll}
\ds  \dbE\int_0^T e^{\beta t}|Y(t)|^{2}dt + \dbE\int_0^T e^{\beta t}\int_t^T \|Z(t,s)\|^{2} dsdt\\
\ns\ds\les\dbE \int_0^T\bigg(2 e^{\beta t}|\psi(t)|^{2} + \frac{2}{\beta}\int_t^T e^{\beta s} |f(t,s)|^{2} ds
+ e^{\beta t} \int_t^T \|g(t,s)\|^{2} ds\bigg)dt.
\ea\ee
Similarly, from (\ref{42}), we obtain
\begin{equation*}
     \dbE|\lambda(t,s)|^{2}+ \dbE\int_s^T \|Z(t,u)\|^{2} du
\les \dbE\bigg(2|\psi(t)|^{2} + 2\bigg|\int_s^T f(t,u) du\bigg|^{2} + \int_s^T \|g(t,u)\|^{2} du\bigg).
\end{equation*}
Combining (\ref{7}) and (\ref{10}) implies that
\bel{14}\ba{ll}
\ds  \dbE\int_0^T\int_t^T \beta e^{\beta s}\bigg(\int_s^T \|Z(t,u)\|^{2} du\bigg) dsdt \\
\ns\ds \les\dbE\int_0^T\bigg( 2e^{\beta t}|\psi(t)|^{2} + \frac{8}{\beta}\int_t^T e^{\beta s}|f(t,s)|^{2} ds
       + \int_t^T e^{\beta s}\|g(t,s)\|^{2} ds \bigg)dt.
\ea\ee
Then, the estimate (\ref{20}) holds by combining (\ref{12}), (\ref{15}) and  (\ref{14}). Finally, the uniqueness of (\ref{2}) comes from the estimate (\ref{20}).
\end{proof}

\section{Well-posedness of BDSVIE}
In this section, we prove the existence and uniqueness of BDSVIEs and FDSVIEs.
Since FDSVIE can be regarded as  a ``forward'' type of BDSVIE, we first consider BDSVIE.

\subsection{BDSVIE}

Now we study the existence of uniqueness of BDSVIE \rf{1}, and for simplicity of presentation, we rewrite it as follows:
\bel{55}\ba{ll}
\ds Y(t)=\psi(t) + \int_t^T f(t,s,Y(s),Z(t,s),Z(s,t)) ds\\
\ns\ds\qq\q\ + \int_t^T g(t,s,Y(s),Z(t,s),Z(s,t)) d\overleftarrow{B}(s)- \int_t^T Z(t,s) dW(s), \qq t\in [0,T],
\ea\ee
where
$f(\omega,t,s,y,z,\zeta):\Omega\times \Delta\times \mathbb{R}^{k}\times \mathbb{R}^{k\times d}\times \mathbb{R}^{k\times d}\rightarrow \mathbb{R}^{k}$
and
$g(\omega,t,s,y,z,\zeta):\Omega\times \Delta\times \mathbb{R}^{k}\times \mathbb{R}^{k\times d}\times \mathbb{R}^{k\times d}\rightarrow \mathbb{R}^{k\times l}$
are $\mathcal{F}_{s}$-measurable,
and $\psi(\omega,t):\Omega\times [0,T]\rightarrow \mathbb{R}^{k}$ is $\mathcal{F}_{T}$-measurable.
The purpose is to prove BDSVIE (\ref{55}) admits a unique solution $(Y(\cdot),Z(\cdot,\cdot))$ in $\mathcal{H}^{2}[0,T]$.
However, as showed in Yong \cite{Yong4}, for the sake of the uniqueness of  solutions, some additional constraints should be imposed on $Z(t,s)$ for $(t,s)\in \Delta^{c}$.
In order to do this, similar to Yong \cite{Yong4}, we introduce the M-solution in the circumstance of BDSVIEs.

\ms

By the definition of $\mathbb{F}=\{\mathcal{F}_{t},t\in[0,T]\}$ (see \rf{0}), we see that it is neither increasing nor decreasing, and so it does not constitute a filtration. In order to define the M-solution, we define the filtration $(\mathcal{G}_{t})_{0\leq t\leq T}$ by
$$\mathcal{G}_{t}\deq\mathcal{F}^{W}_{t}\vee \mathcal{F}^{B}_{0,T}.$$
For any fixed $t\in[0,T]$, suppose a process $Y(t)$ is $\mathcal{G}_{t}$-square integrable.
Then $M(\cd)$ defined by
\begin{equation*}
  M(r) = \dbE [Y(t)|\mathcal{G}_{r}], \qq r\in[0,t],
\end{equation*}
is a $\mathcal{G}_{r}$-square integrable martingale.
An obvious extension of It\^{o}'s martingale representation theorem yields that there exists a unique $\mathcal{G}_{r}$-progressively measurable process $Z(t,\cd)$ (parameterized by $t\in[0,T]$) such that
\begin{equation*}
  M(r) = M(0) + \int_0^r Z(t,s)dW(s), \qq r\in[0,t].
\end{equation*}
In particular, when $r=t$, we have
\begin{equation*}
  Y(t) =\dbE[Y(t)] + \int_0^t Z(t,s)dW(s), \qq  t\in[0,T].
\end{equation*}
It should be pointed out that in the above equation, if $Y(\cd)$ is in fact $\dbF$-measurable, then $Z(t,\cd)$ is also $\dbF$-measurable (see Pardoux and Peng \cite{Peng}). Owing to the above idea, we define the M-solution of BDSVIE (\ref{55}) as follows:
\begin{definition} \rm
For any $S\in[0,T)$, a pair of processes $(Y(\cdot),Z(\cdot,\cdot))\in \mathcal{H}^{2}[0,T]$ is called an
M-solution of BDSVIE (\ref{55}) if  (\ref{55}) is satisfied in the usual It\^{o}'s sense for Lebesgue measure almost
every $t\in[S,T]$ and, in addition, the following relation holds:
\bel{54}Y(t) =\dbE[Y(t)|\cF_S]+\int_S^tZ(t,s)dW(s),\q a.e. \ t\in[S,T].\ee
\end{definition}
Let $\mathcal{M}^{2}[0,T]$ be the set of all pair of $(Y(\cdot),Z(\cdot,\cdot))\in \mathcal{H}^{2}[0,T]$ such that (\ref{54}) holds. Obviously $\mathcal{M}^{2}[0,T]$ is a closed subspace of $\mathcal{H}^{2}[0,T]$.
Note that for any $(Y(\cdot),Z(\cdot,\cdot))\in \mathcal{M}^{2}[0,T]$, one can show that
\begin{equation*}
        \dbE\int_0^{T} \bigg(e^{\beta t}|Y(t)|^{2} + \int_0^{T} e^{\beta s}\|Z(t,s)\|^{2} ds\bigg) dt
  \les 2\dbE\int_0^{T} \bigg(e^{\beta t}|Y(t)|^{2} + \int_t^{T} e^{\beta s}\|Z(t,s)\|^{2} ds\bigg) dt.
\end{equation*}
This means that we can use the following as an equivalent norm in $\mathcal{M}^{2}[0,T]$:
 \begin{equation*}
 \| (Y(\cdot),Z(\cdot,\cdot)) \|_{\mathcal{M}^{2}[0,T]}
 \equiv \bigg[ \dbE\int_0^{T} \bigg(e^{\beta t}|Y(t)|^{2} + \int_t^{T} e^{\beta s}\|Z(t,s)\|^{2} ds\bigg) dt \bigg]^{\frac{1}{2}}.
 \end{equation*}
\begin{remark} \rm
Under the assumptions of Lemma  \ref{19}, if we define $Z(\cdot,\cdot)$ on $\Delta^{c}$ by the relation (\ref{54}),
then BDSVIE (\ref{2}) admits a unique M-solution in $\mathcal{H}^{2}[0,T]$.
\end{remark}
\begin{itemize}
  \item [$\mathbf{(H2)}$]
  Assume that
$f:\Omega\times \Delta\times \mathbb{R}^{k}\times \mathbb{R}^{k\times d}\times \mathbb{R}^{k\times d}\rightarrow \mathbb{R}^{k}$ and
$g:\Omega\times \Delta\times \mathbb{R}^{k}\times \mathbb{R}^{k\times d}\times \mathbb{R}^{k\times d}\rightarrow \mathbb{R}^{k\times l}$
are jointly measurable such that for all $(y,z,\zeta)\in \mathbb{R}^{k}\times \mathbb{R}^{k\times d}\times \mathbb{R}^{k\times d}$,
$f(\cdot,\cdot,y,z,\zeta)\in L_{\dbF}^{2}(\Delta; \mathbb{R}^{k})$ and
$g(\cdot,\cdot,y,z,\zeta)\in L_{\dbF}^{2}(\Delta; \mathbb{R}^{k\times l})$.
Furthermore, there exist some constants $c>0$ and $0<\alpha<\frac{1}{T+2}$ such that for any  $y,y'\in \mathbb{R}^{k},z,z',\zeta,\zeta' \in {R}^{k\times d}$ and
$(t,s) \in \Delta$,
\begin{equation*}
 \begin{cases}
   |f(t,s,y,z,\zeta)-f(t,s,y',z',\zeta')|^{2}\les c(|y-y'|^{2} + \|z-z'\|^{2} + \|\zeta-\zeta'\|^{2});\\
   \|g(t,s,y,z,\zeta)-g(t,s,y',z',\zeta')\|^{2}\les \alpha(|y-y'|^{2} + \|z-z'\|^{2} + \|\zeta-\zeta'\|^{2}).\\
 \end{cases}
\end{equation*}
\end{itemize}
Now we prove the existence and uniqueness of BDSVIE \rf{55}.
For notational simplicity, we denote $f_{0}(t,s)=f(t,s,0,0,0)$ and $g_{0}(t,s)=g(t,s,0,0,0)$.
\begin{theorem}\label{22} \sl
  Under the assumption (H2), for any  $\psi(\cdot)\in L_{\mathcal{F}_{T}}^2(0,T;\mathbb{R}^{k})$,
  BDSVIE (\ref{55}) admits a unique M-solution $(Y(\cdot),Z(\cdot,\cdot))\in \mathcal{H}^{2}[0,T]$.
  Moreover, the following estimate holds,
\bel{18}\ba{ll}
\ds\dbE\bigg[\int_0^T e^{\beta t} |Y(t)|^{2} dt + \int_0^T \int_t^T e^{\beta s} \|Z(t,s)\|^{2} dsdt\bigg] \\
\ns\ds\les L\dbE\bigg[\int_0^T |\psi(t)|^{2} dt +  \int_0^T \int_t^T e^{\beta s}|f_{0}(t,s)|^{2} dsdt
         +\int_0^T \int_t^T e^{\beta s}\|g_{0}(t,s)\|^{2} dsdt\bigg],
\ea\ee
where $L$ is a positive constant which may be different from line to line.
\end{theorem}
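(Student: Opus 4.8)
The plan is to obtain the M-solution as the unique fixed point of a contraction $\Theta$ on the complete space $(\cM^2[0,T],\|\cd\|_{\cM^2[0,T]})$, where one application of $\Theta$ amounts to solving a ``frozen-coefficient'' BDSVIE of the type covered by Lemma~\ref{19}, and where the exponential weight $e^{\b t}$ with $\b$ large is used to absorb the Volterra couplings $Z(t,s)$ and $Z(s,t)$.

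First I would define $\Theta$. Given $(y,z)\in\cM^2[0,T]$, freeze the unknowns in the generators, setting $F(t,s)\deq f(t,s,y(s),z(t,s),z(s,t))$ and $G(t,s)\deq g(t,s,y(s),z(t,s),z(s,t))$ for $(t,s)\in\D$ --- note that the slot $z(s,t)$ is legitimate, since $z$ is $\dbF$-measurable and square-integrable on $[0,T]^2$ for $(y,z)\in\cM^2[0,T]$ --- and check, using the Lipschitz bounds in (H2) together with $f_0,g_0\in L^2_\dbF(\D)$, that $F\in L^2_\dbF(\D;\dbR^k)$ and $G\in L^2_\dbF(\D;\dbR^{k\times l})$. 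By Lemma~\ref{19} (and the Remark after it) the simple BDSVIE with free term $\psi$ and generators $F,G$ has a unique M-solution $(Y,Z)\in\cM^2[0,T]$, with $Z$ given on $\D$ by Lemma~\ref{19} and on $\D^c$ by \rf{54} (case $S=0$); set $\Theta(y,z)=(Y,Z)$. A pair in $\cM^2[0,T]$ is an M-solution of \rf{55} if and only if it is a fixed point of $\Theta$, so it remains to contract.

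Then I would show $\Theta$ is a contraction. For $i=1,2$ write $(Y^i,Z^i)=\Theta(y^i,z^i)$ and denote differences with hats. On $\D$, $(\h{Y},\h{Z})$ solves \rf{2} with free term $0$ and generators $\h{F}=F^1-F^2$, $\h{G}=G^1-G^2$, so estimate \rf{20} applies; by (H2), $|\h{F}(t,s)|^2\les c\big(|\h{y}(s)|^2+\|\h{z}(t,s)\|^2+\|\h{z}(s,t)\|^2\big)$ and $\|\h{G}(t,s)\|^2\les\a\big(|\h{y}(s)|^2+\|\h{z}(t,s)\|^2+\|\h{z}(s,t)\|^2\big)$. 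After a Fubini interchange, the $|\h{y}(s)|^2$-contributions are absorbed via $\int_0^s dt=s\les T$, the $\|\h{z}(t,s)\|^2$-contributions directly (using $e^{\b t}\les e^{\b s}$ on $\D$), and --- the one genuinely new point for Volterra-type equations --- the off-diagonal contributions $\dbE\int_0^T\!\int_t^T e^{\b s}\|\h{z}(s,t)\|^2dsdt$, after relabelling, by the M-solution identity $\dbE\int_0^r e^{\b s}\|\h{z}(r,s)\|^2ds\les e^{\b r}\dbE\big|\h{y}(r)-\dbE[\h{y}(r)\,|\,\cF_0]\big|^2\les e^{\b r}\dbE|\h{y}(r)|^2$, which is precisely \rf{54}. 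This yields $\|\Theta(y^1,z^1)-\Theta(y^2,z^2)\|^2_{\cM^2[0,T]}\les\big(C_1(c,T)/\b+C_2(T)\a\big)\,\|(y^1,z^1)-(y^2,z^2)\|^2_{\cM^2[0,T]}$; since $\a<\frac1{T+2}$ the term $C_2(T)\a$ is strictly below $1$, so for $\b$ large the whole factor is $<1$, and, $\cM^2[0,T]$ being a closed subspace of $\cH^2[0,T]$ (hence complete), the contraction mapping principle gives the unique M-solution. Estimate \rf{18} then follows by applying \rf{20} once more directly to the fixed-point equation, with $|F|^2\les 2|f_0|^2+2c(\cds)$ and $\|G\|^2\les 2\|g_0\|^2+2\a(\cds)$, and moving the $(Y,Z)$-dependent part of the right-hand side --- which by the above is $\les\th\,\|(Y,Z)\|^2_{\cM^2[0,T]}$ with $\th<1$ for $\b$ large --- to the left.

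The main obstacle will be the appearance of $Z(s,t)$ with $(s,t)\in\D^c$ inside the generators: there $Z$ is evaluated off the domain $\D$ on which Lemma~\ref{19} produces it, and the natural norm does not even see it. The M-solution constraint \rf{54} is exactly the device that fixes this --- it pins $Z$ down on $\D^c$ and lets one dominate $\int_{\D^c}\|Z\|^2$ by $\int|Y|^2$ --- but keeping the two distinct weights $e^{\b t}$ and $e^{\b s}$ straight through the Fubini interchanges, handling the measurability of the frozen coefficients, and checking that the contraction constant genuinely falls below $1$ under $\a<\frac1{T+2}$, is where the real care lies. The backward It\^{o} term $g\,d\overleftarrow{B}(s)$ introduces nothing essentially new beyond Lemma~\ref{19}: since there is no It\^{o} formula for Volterra equations, all the genuine stochastic analysis has been pushed into that lemma, which in turn rests on the BDSDE well-posedness of Proposition~\ref{3}.
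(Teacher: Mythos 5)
Your proposal is correct and follows essentially the same route as the paper: a contraction on $\mathcal{M}^2[0,T]$ built by freezing the generators and invoking Lemma~\ref{19}, with $Z$ extended to $\Delta^c$ via \eqref{54}, the off-diagonal terms $\|z(s,t)\|^2$ absorbed into $\int e^{\beta s}|y(s)|^2\,ds$ through the martingale-representation identity, and the contraction constant of the form $K/\beta+\alpha(T+2)<1$ secured by $\alpha<\tfrac{1}{T+2}$ and $\beta$ large. The derivation of \eqref{18} from the a priori estimate applied to the fixed point also matches the paper.
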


\begin{proof}
For any $(y(\cdot),z(\cdot,\cdot))\in \mathcal{M}^{2}[0,T]$, consider the following BDSVIE,
\bel{23}\ba{ll}
\ns\ds Y(t)=\psi(t) + \int_t^T f(t,s,y(s),z(t,s),z(s,t)) ds\\
\ns\ds\qq\q+ \int_t^T g(t,s,y(s),z(t,s),z(s,t)) d\overleftarrow{B}(s) - \int_t^T Z(t,s) dW(s), \qq t\in[0,T].
\ea\ee
From Lemma \ref{19}, there exists a unique solution $(Y(\cdot),Z(\cdot,\cdot))\in \mathcal{H}_{\Delta}^{2}[0,T]$.
Now if we define $Z(\cdot,\cdot)$ on $\Delta^{c}$ by the relation (\ref{54}),
then $(Y(\cdot),Z(\cdot,\cdot))\in \mathcal{M}^{2}[0,T]$ is an M-solution to (\ref{23}).
Thus, the map $\Gamma:(y(\cdot),z(\cdot,\cdot))\mapsto (Y(\cdot),Z(\cdot,\cdot))$ is well-defined. Next, we prove $\G$ is a contraction on $\mathcal{M}^{2}[0,T]$.
By the estimate (\ref{20}), we have
\begin{align}
\dbE&\int_0^T e^{\beta t} |Y(t)|^{2} dt + \dbE\int_0^T \int_t^T e^{\beta s} \|Z(t,s)\|^{2} dsdt \nonumber\\
   \les&\ 4\dbE\int_0^Te^{\beta t} |\psi(t)|^{2} dt
        + \frac{10}{\beta} \dbE\int_0^T \int_t^T e^{\beta s}|f(t,s,y(s),z(t,s),z(s,t))|^{2} dsdt \label{24}\\
       &+ \dbE\int_0^T e^{\beta t}\int_t^T \|g(t,s,y(s),z(t,s),z(s,t))\|^{2} dsdt \label{26}\\
       &+ \dbE\int_0^T \int_t^T e^{\beta s} \|g(t,s,y(s),z(t,s),z(s,t))\|^{2} dsdt. \label{27}
\end{align}
For the second term in (\ref{24}), from (H2) one has
\begin{equation*}
\begin{split}
       \dbE& \int_0^T \int_t^T e^{\beta s} |f(t,s,y(s),z(t,s),z(s,t))|^{2} dsdt\\
  \les&\ \dbE \int_0^T \int_t^T e^{\beta s}\left(|f_{0}(t,s)|^{2} + c|y(s)|^{2} + c \|z(t,s)\|^{2} + c \|z(s,t)\|^{2}\right) dsdt\\
  \les&\  c(T+1)\dbE \int_0^T e^{\beta t}|y(t)|^{2} dt
        + c\dbE\int_0^T \int_t^T e^{\beta s} \|z(t,s)\|^{2} ds dt\\
       & +\dbE \int_0^T \int_t^T e^{\beta s}|f_{0}(t,s)|^{2} dsdt.
\end{split}
\end{equation*}
Similarly, for the term (\ref{26}), one has
\begin{equation*}
\begin{split}
       \dbE&\int_0^T e^{\beta t}\int_t^T \|g(t,s,y(s),z(t,s),z(s,t))\|^{2} dsdt\\
  \les&\ \dbE \int_0^T e^{\beta t} \int_t^T\left(\|g_{0}(t,s)\|^{2}
         + \alpha|y(s)|^{2} + \alpha \|z(t,s)\|^{2} + \alpha \|z(s,t)\|^{2}\right) dsdt\\
  \les&\ \big(\frac{\alpha}{\beta}+\alpha \big) \dbE \int_0^T e^{\beta t}|y(t)|^{2} dt
        + \alpha \dbE\int_0^T \int_t^T e^{\beta s}\|z(t,s)\|^{2} ds dt\\
       & + \dbE \int_0^T \int_t^T e^{\beta s}\|g_{0}(t,s)\|^{2} dsdt.
\end{split}
\end{equation*}
Also, for the term (\ref{27}), one has
\begin{equation*}
\begin{split}
       \dbE& \int_0^T \int_t^T e^{\beta s} \|g(t,s,y(s),z(t,s),z(s,t))\|^{2} dsdt\\
  \les&\ \dbE\int_0^T \int_t^T e^{\beta s}\left(\|g_{0}(t,s)\|^{2} + \alpha|y(s)|^{2}
           + \alpha \|z(t,s)\|^{2} + \alpha \|z(s,t)\|^{2}\right) dsdt\\
  \les&\     \alpha(T+1)\dbE \int_0^T e^{\beta t}|y(t)|^{2} dt
           + \alpha \dbE\int_0^T \int_t^T e^{\beta s}\|z(t,s)\|^{2} ds dt\\
      &    + \dbE \int_0^T \int_t^T e^{\beta s}\|g_{0}(t,s)\|^{2} dsdt.
\end{split}
\end{equation*}
Hence we deduce
\bel{25}\ba{ll}
\ds\dbE\int_0^T e^{\beta t} |Y(t)|^{2} dt + \dbE\int_0^T \int_t^T e^{\beta s} \|Z(t,s)\|^{2} dsdt \\
\ns\ds\les4\dbE\int_0^Te^{\b t}|\psi(t)|^{2}dt+\frac{10}{\beta}\dbE\int_0^T\int_t^T e^{\b s}|f_{0}(t,s)|^{2}dsdt \\
\ns\ds\q + 2 \dbE \int_0^T \int_t^T e^{\beta s}\|g_{0}(t,s)\|^{2} dsdt
         + \big[\frac{K}{\beta}+\alpha(T+2)\big]\dbE \int_0^T e^{\beta t}|y(t)|^{2} dt \\
\ns\ds\q + \big(\frac{10c}{\beta}+2\alpha \big)\dbE\int_0^T \int_t^T e^{\beta s}\|z(t,s)\|^{2} ds dt,
\ea\ee
where $K=10c(T+1)+\alpha$.
Then, if $(Y_{i}(\cdot),Z_{i}(\cdot,\cdot))$ is the corresponding M-solution of $(y_{i}(\cdot),z_{i}(\cdot,\cdot))$ to BDSVIE (\ref{23}) with $i=1,2$, we have
$$\ba{ll}
\ds\dbE \bigg(\int_0^{T}e^{\beta t}|Y_{1}(t)-Y_{2}(t)|^{2} dt
   + \int_0^{T}\int_t^{T} e^{\beta s}\|Z_{1}(t,s)-Z_{2}(t,s)\|^{2} ds dt\bigg)\\
\ns\ds \les\delta\dbE \bigg(\int_0^{T}e^{\beta t}|y_{1}(t)-y_{2}(t)|^{2} dt + \int_0^{T}\int_t^{T} e^{\beta s}\|z_{1}(t,s)-z_{2}(t,s)\|^{2} ds dt\bigg),
\ea$$
where $\delta=\frac{K}{\beta}+\alpha(T+2)$. Note that $\alpha<\frac{1}{T+2}$, if we let $\beta$ be some proper constant such that $\beta>\frac{1-\alpha(T+2)}{K}$, then $(y(\cdot),z(\cdot,\cdot))\mapsto (Y(\cdot),Z(\cdot,\cdot))$ is a contraction on $\mathcal{M}^{2}[0,T]$. By the contracting mapping principle, we see that BDSVIE \rf{55} admits a unique M-solution. Finally, the estimate (\ref{18}) directly follows from (\ref{25}). This completes the proof.
\end{proof}

\begin{remark}\rm
One may notice that, in assumption (H2), $\a$ is restricted to the interval $(0,\frac{1}{T+2})$. This is a technical problem caused by the absence of the It\^{o} formula in the theory of BSVIEs. Similar to Pardoux and Peng \cite{Peng}, we hope to relax the restriction of $\a$ to $(0,1)$ in the near future.
\end{remark}
From Theorem \ref{22}, we see that in BDSVIE \rf{55}, if the coefficients $f(\cdot)$ and $g(\cdot)$ are independent of $Z(s,t)$, then the related equation admits a unique solution in $\mathcal{H}_{\Delta}^{2}[0,T]$.
\begin{corollary}\rm
 Suppose that $f=f(t,s,y,z)$ and $g=g(t,s,y,z)$ satisfy (H2). Then for any given terminal condition $\psi(\cdot)\in L_{\mathcal{F}_{T}}^2(0,T;\mathbb{R}^{k})$, the following BDSVIE:
$$Y(t)=\psi(t)+\int_t^Tf(t,s,Y(s),Z(t,s))ds+\int_t^Tg(t,s,Y(s),Z(t,s))d\overleftarrow{B}(s)-\int_t^T Z(t,s) dW(s),
\q~ t\in[0,T],$$
admits a unique solution $(Y(\cdot),Z(\cdot,\cdot))\in \mathcal{H}_{\Delta}^{2}[0,T]$.
\end{corollary}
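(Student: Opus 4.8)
The plan is to reduce the corollary to Theorem~\ref{22}, as the sentence preceding the statement already suggests. First I would extend the two maps to depend (trivially) on one more state variable: set $\bar f(t,s,y,z,\zeta)\deq f(t,s,y,z)$ and $\bar g(t,s,y,z,\zeta)\deq g(t,s,y,z)$ for $\zeta\in\mathbb{R}^{k\times d}$. The Lipschitz hypotheses imposed on $f$ and $g$ in the corollary are exactly (H2) read in the variables $(y,z)$, with Lipschitz constant $0$ in $\zeta$; hence $(\bar f,\bar g)$ satisfies (H2) with the same constants $c>0$ and $\alpha\in(0,\frac1{T+2})$. By Theorem~\ref{22}, for every $\psi(\cd)\in L^2_{\mathcal{F}_T}(0,T;\mathbb{R}^k)$ the BDSVIE \rf{55} with coefficients $(\bar f,\bar g)$ has a unique M-solution $(Y(\cd),Z(\cd,\cd))\in\mathcal{M}^2[0,T]\subset\mathcal{H}^2[0,T]$, together with the estimate \rf{18}.

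For existence in $\mathcal{H}_\Delta^2[0,T]$ I would just restrict $Z$ to $\Delta$. Since $\bar f,\bar g$ do not depend on $\zeta$, equation \rf{55} (for these coefficients) never involves the values of $Z$ outside $\Delta$, so $(Y(\cd),Z(\cd,\cd)|_\Delta)\in\mathcal{H}_\Delta^2[0,T]$ and, for a.e.\ $t\in[0,T]$, satisfies precisely the equation in the corollary. For uniqueness, let $(Y^i(\cd),Z^i(\cd,\cd))\in\mathcal{H}_\Delta^2[0,T]$, $i=1,2$, be two solutions of the corollary's equation. For a.e.\ $t$ the variable $Y^i(t)$ is $\mathcal{F}_t$-measurable and square integrable, so the martingale representation underlying the M-solution concept produces a process $\widetilde Z^i(t,\cd)$ on $[0,t]$ such that \rf{54} holds with $S=0$; set $\widetilde Z^i\deq Z^i$ on $\Delta$. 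Then It\^o's isometry gives $\dbE\int_0^T\int_0^t\|\widetilde Z^i(t,s)\|^2\,ds\,dt\les\dbE\int_0^T|Y^i(t)|^2\,dt<\i$, so $(Y^i,\widetilde Z^i)\in\mathcal{M}^2[0,T]$, and because \rf{55} with coefficients $(\bar f,\bar g)$ is unaffected by the values of $\widetilde Z^i$ off $\Delta$, the pair $(Y^i,\widetilde Z^i)$ is an M-solution of \rf{55}. By the uniqueness part of Theorem~\ref{22}, $(Y^1,\widetilde Z^1)=(Y^2,\widetilde Z^2)$ in $\mathcal{H}^2[0,T]$; restricting to $\Delta$ yields $Y^1=Y^2$ and $Z^1=Z^2$, which is the required uniqueness.

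I do not anticipate a genuine obstacle: the single point that needs a line of care is verifying that the $\Delta^c$-extension of an $\mathcal{H}_\Delta^2$-solution is square integrable and constitutes an admissible M-solution, and this is immediate from It\^o's isometry and the $\mathcal{F}_t$-measurability of $Y(t)$. An alternative that avoids Theorem~\ref{22} altogether is to rerun its fixed-point argument directly on $\mathcal{H}_\Delta^2[0,T]$: for frozen $(y,z)\in\mathcal{H}_\Delta^2[0,T]$, Lemma~\ref{19} applied to $Y(t)=\psi(t)+\int_t^Tf(t,s,y(s),z(t,s))\,ds+\int_t^Tg(t,s,y(s),z(t,s))\,d\overleftarrow{B}(s)-\int_t^TZ(t,s)\,dW(s)$ defines a self-map $\G$ of $\mathcal{H}_\Delta^2[0,T]$ (its solution component $Z$ lives only on $\Delta$), and estimate \rf{20} together with (H2) shows $\G$ is a contraction for $\beta$ large, the contraction constant tending to $\max\{\alpha T,2\alpha\}<1$ as $\beta\to\i$ since $\alpha<\frac1{T+2}$ and $T>0$; no $Z(s,t)$-term appears, so the M-solution correction used in Theorem~\ref{22} is unnecessary here and the argument is a bit shorter. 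Either route works.
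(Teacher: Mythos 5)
Your proposal is correct and follows the same route as the paper, which states the corollary as an immediate consequence of Theorem \ref{22} without further argument: you simply make explicit the trivial extension of $(f,g)$ to $\zeta$-dependent coefficients, the restriction of the resulting M-solution to $\Delta$ for existence, and the martingale-representation extension of an $\mathcal{H}_{\Delta}^{2}[0,T]$ solution to an element of $\mathcal{M}^{2}[0,T]$ for uniqueness. The alternative direct contraction on $\mathcal{H}_{\Delta}^{2}[0,T]$ via Lemma \ref{19} is also sound, but it is an optional extra rather than a genuinely different approach.
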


\subsection{FDSVIE}
In this subsection, due to that FDSVIE can be regarded as a ``forward'' type of BDSVIE,
we briefly show the existence and uniqueness result for FBSVIE. Consider the following FBSVIE:
\bel{61}\ba{ll}
\ds  P(t)=\varphi(t) + \int_0^t b(t,s,P(s),Q(t,s),Q(s,t)) ds\\
\ns\ds\qq\q + \int_0^t \sigma(t,s,P(s),Q(t,s),Q(s,t))dW_{s} - \int_0^t Q(t,s) d\overleftarrow{B}_{s}, \qq t\in [0,T],
\ea\ee
where
$b(\omega,t,s,p,q,\vartheta):\Omega\times \Delta^{c}\times \mathbb{R}^{k}\times \mathbb{R}^{k\times l}\times \mathbb{R}^{k\times l}\rightarrow \mathbb{R}^{k}$
and
$\sigma(\omega,t,s,p,q,\vartheta):\Omega\times \Delta^{c}\times \mathbb{R}^{k}\times \mathbb{R}^{k\times l}\times \mathbb{R}^{k\times l}\rightarrow \mathbb{R}^{k\times d}$
are $\mathcal{F}_{s}$-measurable given maps,
and $\varphi(\omega,t):\Omega\times [0,T]\rightarrow \mathbb{R}^{k}$ is $\mathcal{F}_{t}$-measurable.

\ms

The same as above subsection, in order to obtain the uniqueness of solutions,
we introduce a ``backward'' M-solution for FDSVIE (\ref{61}).
\begin{definition} \rm
A pair of processes $(P(\cdot),Q(\cdot,\cdot))\in \mathcal{H}^{2}[0,T]$ is called a solution of FDSVIE \rf{61} if \rf{61} is satisfied in the usual It\^{o} sense for Lebesgue measure almost every $t\in[0,T]$. In addition, if for any $S\in(0,T]$,
the following relation holds:
\begin{equation}\label{56}
  P(t) =\dbE[P(t)|\cF_S] + \int_t^S Q(t,s)d\overleftarrow{B}(s), \q a.e. \ t\in[0,S],
\end{equation}
then $(P(\cdot),Q(\cdot,\cdot))$ is called an M-solution of FDSVIE (\ref{61}).
\end{definition}
The above definition is based on the ideas of Pardoux and Peng \cite{Peng} and the ``backward" martingale representation theorem.
Let $\mathcal{N}^{2}[0,T]$ be the set of all $(P(\cdot),Q(\cdot,\cdot))\in \mathcal{H}^{2}[0,T]$ such that the relation (\ref{56}) holds. Obviously, $\mathcal{N}^{2}[0,T]$ is a closed subspace of $\mathcal{H}^{2}[0,T]$.
 To guarantee the existence and uniqueness of (\ref{61}), we make the following assumption.
\begin{itemize}
  \item [$\mathbf{(H3)}$]
  Assume $b:\Omega\times \Delta^{c}\times \mathbb{R}^{k}\times \mathbb{R}^{k\times l}\times \mathbb{R}^{k\times l}\rightarrow \mathbb{R}^{k}$ and $\sigma:\Omega\times \Delta^{c}\times \mathbb{R}^{k}\times \mathbb{R}^{k\times l}\times \mathbb{R}^{k\times l}\rightarrow \mathbb{R}^{k\times d}$ are jointly measurable such that
for all $(p,q,\vartheta)\in \mathbb{R}^{k}\times \mathbb{R}^{k\times l}\times \mathbb{R}^{k\times l}$,
$b(\cdot,\cdot,p,q,\vartheta)\in L_{\dbF}^{2}(\Delta^{c}; \mathbb{R}^{k})$ and
$\sigma(\cdot,\cdot,p,q,\vartheta)\in L_{\dbF}^{2}(\Delta^{c}; \mathbb{R}^{k\times d})$.
Furthermore, there exist some constants $c>0$ and $0<\alpha<\frac{1}{T+2}$ such that for any $p,p'\in \mathbb{R}^{k},q,q',\vartheta,\vartheta' \in {R}^{k\times l}$ and $(t,s) \in \Delta^{c}$,
\begin{equation*}
 \begin{cases}
   |b(t,s,p,q,\vartheta)-b(t,s,p',q',\vartheta')|^{2}\les c(|p-p'|^{2} + \|q-q'\|^{2} + \|\vartheta-\vartheta'\|^{2});\\
   \|\sigma(t,s,p,q,\vartheta)-\sigma(t,s,p,q,\vartheta)\|^{2}\les \alpha(|p-p'|^{2} + \|q-q'\|^{2} + \|\vartheta-\vartheta'\|^{2}).
 \end{cases}
\end{equation*}
\end{itemize}
 Similar to BDSVIEs, we have the following existence and uniqueness concerning FDSVIEs. Since the proof of the following Theorem \ref{63} is similar to the proof of Theorem \ref{22}, we only present the result and nevertheless include a complete proof for the convenience of the reader.
%
\begin{theorem}\label{63} \sl
  Let the assumption (H3) hold. Then for any  $\varphi(\cdot)\in L_{\dbF}^2(0,T;\mathbb{R}^{k})$,
  FDSVIE (\ref{61}) admits a unique M-solution $(p(\cdot),q(\cdot,\cdot))\in \mathcal{H}^{2}[0,T]$.
  Moreover, the following estimate holds,
\bel{66}\ba{ll}
\ds \dbE\bigg[\int_0^T e^{\beta t} |p(t)|^{2} dt + \int_0^T \int_0^t e^{\beta s} \|q(t,s)\|^{2} dsdt\bigg] \\
\ns\ds\les L \dbE\bigg[\int_0^T |\varphi(t)|^{2} dt +  \int_0^T \int_0^t e^{\beta s}|b_{0}(t,s)|^{2} dsdt
        +\int_0^T \int_0^t e^{\beta s}\|\sigma_{0}(t,s)\|^{2} dsdt\bigg],
\ea\ee
where $b_{0}(t,s)=b(t,s,0,0,0)$ and $\sigma_{0}(t,s)=\sigma(t,s,0,0,0)$, and $L$ is a constant.
\end{theorem}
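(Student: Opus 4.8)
The plan is to run the same contraction argument as in the proof of Theorem \ref{22}, now on the closed subspace $\mathcal{N}^2[0,T]\subset\mathcal{H}^2[0,T]$ of pairs satisfying the ``backward'' M-relation \rf{56}, equipped with the equivalent norm $\|(P,Q)\|^2=\dbE\int_0^T\big(e^{\beta t}|P(t)|^2+\int_0^t e^{\beta s}\|Q(t,s)\|^2\,ds\big)\,dt$ for a parameter $\beta>0$ to be fixed. The first step is to establish the ``forward'' (reverse-time) counterpart of Lemma \ref{19}: for $\varphi(\cd)\in L_\dbF^2(0,T;\dbR^k)$, $b(\cd)\in L_\dbF^2(\Delta^c;\dbR^k)$ and $\sigma(\cd)\in L_\dbF^2(\Delta^c;\dbR^{k\times d})$, the simple FDSVIE $P(t)=\varphi(t)+\int_0^t b(t,s)\,ds+\int_0^t\sigma(t,s)\,dW(s)-\int_0^t Q(t,s)\,d\overleftarrow{B}(s)$ has a unique solution together with an a priori estimate of exactly the shape of \rf{20}, but with every occurrence of $\int_t^T$ replaced by $\int_0^t$ (so the boundary terms produced by the weighting identities now sit at $s=0$ and carry the trivial weight $e^{0}=1$). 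As in Lemma \ref{19}, this is obtained by fixing $t\in[0,T]$, solving on $[0,t]$ the doubly stochastic differential equation associated with the data $(\varphi(t),b(t,\cd),\sigma(t,\cd))$ --- here the $d\overleftarrow{B}$-martingale part carries the unknown and is handled by the ``backward'' martingale representation theorem (w.r.t.\ $\{\mathcal{F}^B_{s,T}\}$), while the $dW$-integral is an ordinary forward It\^o integral with given integrand, the roles of $W$ and $B$ thus being interchanged relative to \rf{5} --- and then setting $P(t)$ and $Q(t,\cd)$ on $\Delta^c$ from this solution and extending $Q$ to $\Delta$ by \rf{56}. The estimate then follows from Cauchy--Schwarz exactly as in \rf{6}--\rf{20}, with the auxiliary weight $\gamma\in\{\beta/2,\beta\}$ now applied on $[0,s]$.

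The second step is routine: define the map $\G:(p(\cd),q(\cd,\cd))\mapsto (P(\cd),Q(\cd,\cd))$, where $(P,Q)$ is the M-solution of the simple FDSVIE obtained by freezing the unknowns, i.e.\ with $b(t,s):=b(t,s,p(s),q(t,s),q(s,t))$ and $\sigma(t,s):=\sigma(t,s,p(s),q(t,s),q(s,t))$; by (H3) these frozen coefficients lie in $L_\dbF^2(\Delta^c;\cdot)$, so Step 1 applies and $\G$ maps $\mathcal{N}^2[0,T]$ into itself. Feeding two inputs $(p_i,q_i)$, $i=1,2$, into the a priori estimate and using the Lipschitz bounds in (H3) (together with \rf{56} to re-express the $\|q(s,t)\|^2$ contributions, exactly as the $\mathcal{M}^2$-relation \rf{54} is used in the proof of Theorem \ref{22}) yields $\|\G(p_1,q_1)-\G(p_2,q_2)\|^2\les\delta\,\|(p_1,q_1)-(p_2,q_2)\|^2$ with $\delta=\frac{K}{\beta}+\alpha(T+2)$ for a constant $K=K(c,\alpha,T)$, mirroring \rf{25}. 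Since $\alpha<\frac1{T+2}$ by (H3), taking $\beta$ large enough makes $\delta<1$; $\G$ is then a contraction and its unique fixed point is the unique M-solution of \rf{61}. The estimate \rf{66} follows from the analogue of \rf{25} applied to the fixed point (absorbing the small-coefficient $P,Q$-terms on the right and the bounded weights $e^{\beta t}\les e^{\beta T}$ into the constant), exactly as \rf{18} is derived from \rf{25}.

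The main obstacle is Step 1, and more precisely pinning down the reverse-time doubly stochastic differential equation on $[0,t]$ and the exact measurability it produces: one must check that solving with the $B$-integral read backward (via the backward martingale representation theorem) and the $W$-integral read forward yields a pair $(P,Q)$ genuinely in $\mathcal{H}^2[0,T]$ after the extension \rf{56}, and that \rf{56} is self-consistent --- these are the analogues, for the hybrid filtration $\mathcal{F}_t=\mathcal{F}^W_t\vee\mathcal{F}^B_{t,T}$, of the measurability remarks accompanying \rf{54}. (Alternatively, one could substitute $t\mapsto T-t$, $s\mapsto T-s$ and reverse $B$ and $W$ in time, which turns \rf{61} into an equation of the form \rf{55} and deduces Theorem \ref{63} from Theorem \ref{22}; but this requires equally careful bookkeeping of the reversed filtrations, and the direct argument above is the one parallel to the treatment of (forward) stochastic Volterra equations in Yong \cite{Yong4}.) Once Step 1 is in place, all remaining computations are the obvious $\int_t^T\rightsquigarrow\int_0^t$ transcriptions of those in Lemma \ref{19} and Theorem \ref{22}.
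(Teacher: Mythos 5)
Your proposal is correct and follows exactly the route the paper intends: the paper in fact omits the proof of Theorem \ref{63}, stating only that it is similar to that of Theorem \ref{22}, and your reconstruction --- a forward analogue of Lemma \ref{19} obtained by solving a family of reverse-time doubly stochastic equations with the roles of $W$ and $B$ interchanged (backward martingale representation supplying $Q$), followed by the same contraction argument on $\mathcal{N}^{2}[0,T]$ with the relation (\ref{56}) used to absorb the $\|q(s,t)\|^{2}$ contributions --- is precisely that transcription. Your closing remarks on the measurability bookkeeping and on the alternative time-reversal reduction to Theorem \ref{22} are sensible additions but go beyond anything the paper records.
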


\section{Comparison theorem of BDSVIE}

In this section, we consider one-dimensional BDSVIE for which we prove a comparison theorem.
As one of its applications, the existence of solutions of one-dimensional BDSVIE with continuous coefficient is established.

\subsection{Comparison theorem}

For $i=1,2$, we study a comparison theorem of BDSVIE of the following type:
\begin{equation}\label{28}
\begin{split}
   Y^{i}(t)=&\psi^{i}(t) + \int_t^T f^{i}(t,s,Y^{i}(s),Z^{i}(t,s)) ds\\
            &+ \int_t^T g(t,s,Y^{i}(s),Z^{i}(t,s)) d\overleftarrow{B}(s) - \int_t^T Z^{i}(t,s) dW(s), \qq t\in[0,T].
\end{split}
\end{equation}
The key feature here is that the coefficients $f^{i}(\cdot)$ and $g(\cdot)$ are independent of $Z^{i}(s,t)$.
In this situation, for any solution $(Y^{i}(\cdot),Z^{i}(\cdot,\cdot))\in \mathcal{H}_{\Delta}^{2}[0,T]$ of the above equation, $Z^{i}(t,s)$ is only (uniquely) defined on $(t,s)\in \Delta$, and we do not need the values $Z^{i}(t,s)$ of $Z^{i}(\cdot,\cdot)$ for $(t,s)\in \Delta^{c}$. Suppose $f^{i}=f^{i}(t,s,y,z)$ and $g=g(t,s,y,z)$ satisfy (H2).
Then for any $\psi^{i}(\cdot)\in L_{\mathcal{F}_{T}}^2(0,T;\mathbb{R}^{k})$, Eq. (\ref{28}) has a unique solution
 $(Y^{i}(\cdot),Z^{i}(\cdot,\cdot))$ $\in \mathcal{H}_{\Delta}^{2}[0,T]$.

\ms

Next, we want to show a proper comparison between $Y^{1}(\cdot)$ and $Y^{2}(\cdot)$. Here, we only consider one-dimensional case, i.e., $k=l=1$, and for notational convenience, we assume $d=1$.
To begin with, let us consider the following simple BDSVIE: for $i=1,2,$
\begin{equation}\label{29}
   Y^{i}(t)=\psi^{i}(t) + \int_t^T f^{i}(t,s,Z^{i}(t,s)) ds
            +\int_t^T g(t,s,Z^{i}(t,s)) d\overleftarrow{B}(s) - \int_t^T Z^{i}(t,s) dW(s), \qq t\in [0,T].
\end{equation}

\begin{proposition}\label{35}\rm
For $i=1,2,$ let $f^{i}=f^{i}(t,s,z)$ and $g=g(t,s,z)$ satisfy (H2).
Moreover,
$$f^{1}(t,s,z)\ges f^{2}(t,s,z), \q \forall (t,z)\in [0,s]\times \mathbb{R}, \ \ a.s., \  a.e. \ s\in[0,T].$$
Then for any $\psi^{i}(\cdot)\in L_{\mathcal{F}_{T}}^2(0,T;\mathbb{R})$ with
   $\psi^{1}(t)\ges \psi^{2}(t),  \ a.s., \ t\in[0,T],$
the corresponding unique solution $(Y^{i}(\cdot),Z^{i}(\cdot,\cdot))\in \mathcal{H}_{\Delta}^{2}[0,T]$ of (\ref{29}) satisfy
   \begin{equation}\label{32}
   Y^{1}(t)\ges Y^{2}(t), \q a.s.,\ a.e.\ t\in[0,T].
   \end{equation}
\end{proposition}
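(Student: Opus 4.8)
The key structural observation is that in \rf{29} the generators $f^i$ and $g$ depend on the unknown only through the diagonal slice $Z^i(t,\cdot)$, and not through $Y^i(\cdot)$ or through $Z^i(s,t)$. Hence, for each frozen $t$, the pair $\big(Y^i(t),Z^i(t,\cdot)\big)$ restricted to $[t,T]$ should solve an ordinary BDSDE, and one can read off the comparison from the already-known comparison principle for BDSDEs, Proposition \ref{4}. So the plan is: decouple \rf{29} into a $t$-indexed family of BDSDEs, apply Proposition \ref{4} slice by slice, and patch the resulting inequalities together.

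\emph{Step 1 (decoupling).} Fix $t\in[0,T]$ and, as in the proof of Lemma \ref{19}, introduce for $i=1,2$ the BDSDE on $[t,T]$
\begin{equation*}
  \lambda^i(t,r)=\psi^i(t)+\int_r^T f^i\big(t,s,\mu^i(t,s)\big)\,ds+\int_r^T g\big(t,s,\mu^i(t,s)\big)\,d\overleftarrow{B}(s)-\int_r^T \mu^i(t,s)\,dW(s),\q r\in[t,T].
\end{equation*}
For a.e.\ $t$ the maps $s\mapsto f^i(t,s,\cdot)$ and $s\mapsto g(t,s,\cdot)$ inherit from (H2) the measurability and Lipschitz properties required by (H1) — note that the diffusion coefficient is even independent of the ``$y$''-variable and its Lipschitz constant $\alpha<\frac1{T+2}<1$ — so Proposition \ref{3} provides a unique solution $(\lambda^i(t,\cdot),\mu^i(t,\cdot))$. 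Setting $Y^i(t):=\lambda^i(t,t)$ and $Z^i(t,s):=\mu^i(t,s)$ for $t\les s\les T$ gives, exactly as in Lemma \ref{19}, the solution of \rf{29}, which is unique by Theorem \ref{22} (coefficients independent of $Z(s,t)$).

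\emph{Step 2 (slice-wise comparison and patching).} Apply Proposition \ref{4} to the two BDSDEs of Step 1 for the same frozen $t$: the time variable is $r\in[t,T]$, the terminal value is $\psi^i(t)$ with $\psi^1(t)\ges\psi^2(t)$ a.s., the generator is $(s,y,z)\mapsto f^i(t,s,z)$ with $f^1(t,s,z)\ges f^2(t,s,z)$ for all $z$ and a.e.\ $s\in[t,T]$ (the hypothesis is imposed exactly for $(t,z)\in[0,s]\times\mathbb{R}$, i.e.\ precisely for $s\ges t$; the ``a.e.\ $s$'' is enough to invoke Proposition \ref{4} after a null-set modification in $s$), and the diffusion coefficient $g$ is common. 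Proposition \ref{4} then yields $\lambda^1(t,r)\ges\lambda^2(t,r)$ for $r\in[t,T]$, a.s.; at $r=t$ this is $Y^1(t)\ges Y^2(t)$ a.s. Since this holds for a.e.\ $t$ and $(\omega,t)\mapsto Y^i(t,\omega)$ is jointly measurable, Fubini's theorem gives that $\{(\omega,t):Y^1(t,\omega)<Y^2(t,\omega)\}$ is $\dbP\otimes dt$-null, which is \rf{32}.

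\emph{Main obstacle.} There is no serious analytic difficulty; the only point that needs care is Step 1, namely verifying that dropping $Y^i(s)$ and $Z^i(s,t)$ from the generators really does turn \rf{29} into a bona fide $t$-indexed family of BDSDEs to which Proposition \ref{4} applies unchanged — in particular that the $y$-independent diffusion coefficient and the sharpened Lipschitz bound of (H2) fall within the scope of hypothesis (H1) used by Proposition \ref{4}, and that the slice-wise a.s.\ inequalities assemble into the stated ``a.s., a.e.\ $t$'' conclusion.
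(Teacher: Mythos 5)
Your proposal is correct and follows essentially the same route as the paper: freeze $t$, pass to the $t$-parameterized family of BDSDEs as in Lemma \ref{19}, invoke the BDSDE comparison result (Proposition \ref{4}) to get $\lambda^1(t,r)\ges\lambda^2(t,r)$, and evaluate at $r=t$ to conclude. Your added care about verifying that (H2) implies (H1) for each slice and about assembling the slice-wise inequalities via Fubini is a welcome tightening of details the paper leaves implicit, but it is not a different argument.
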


\begin{proof}
Fix $ t\in[0,T]$. For  $i=1,2,$ let $(\lambda^{i}(t,\cdot),\mu^{i}(t,\cdot))$ be the solution of the following BDSDE (parameterized by $t$):
\begin{equation}\label{30}
   \lambda^{i}(t,r)=\psi^{i}(t) + \int_r^T f^{i}(t,s,\mu^{i}(t,s)) ds
                    + \int_r^T g(t,s,\mu^{i}(t,s)) d\overleftarrow{B}(s) - \int_r^T \mu^{i}(t,s) dW(s), \qq r\in [t,T].
\end{equation}
By Proposition \ref{4}, we have
\begin{equation}\label{31}
   \lambda^{1}(t,r)\ges  \lambda^{2}(t,r), \q a.s.  \ r\in[t,T].
\end{equation}
Now let
\begin{equation*}
   Y^{i}(t)=\lambda^{i}(t,t), \qq Z^{i}(t,s)=\mu^{i}(t,s), \qq \forall  (t,s) \in \Delta.
\end{equation*}
Then $(Y^{i}(\cdot),Z^{i}(\cdot,\cdot))$ is the solution of (\ref{30}). Finally, by sending $r\downarrow t$ in (\ref{31}), we get the result (\ref{32}).
\end{proof}

Returning to BDSVIE (\ref{28}), we have the following result.

\begin{theorem}\label{43}\sl
For $i=1,2,$ let $f^{i}(t,s,y,z)$ and $g(t,s,y,z)$ satisfy (H2). Suppose $\overline{f}=\overline{f}(t,s,y,z)$ satisfies (H2) and for any $(t,s,z)\in \Delta\times \mathbb{R}$, $\overline{f}(t,s,\cdot,z)$ is increasing, i.e.,
$\overline{f}(t,s,y_{1},z)\ges \overline{f}(t,s,y_{2},z)$, if $y_{1}\ges y_{2}$ with $y_{1},y_{2}\in \mathbb{R}$.
Moreover,
\begin{equation*}
  f^{1}(t,s,y,z)\ges \overline{f}(t,s,y,z)\geq f^{2}(t,s,y,z), \qq
   \forall (t,y,z)\in [0,s]\times \mathbb{R}\times \mathbb{R}, \ \ \text{a.s.,  a.e.} \ s\in[0,T].
\end{equation*}
Then for any $\psi^{i}(\cdot)\in L_{\mathcal{F}_{T}}^2(0,T;\mathbb{R})$ satisfying
   $\psi^{1}(t)\ges \psi^{2}(t), \ a.s. \  t\in[0,T],$
the corresponding unique solution $(Y^{i}(\cdot),Z^{i}(\cdot,\cdot))\in \mathcal{H}_{\Delta}^{2}[0,T]$ of (\ref{28}) satisfy
   \begin{equation*}
   Y^{1}(t)\ges Y^{2}(t), \qq \text{a.s.,  a.e.} \ t\in[0,T].
   \end{equation*}
\end{theorem}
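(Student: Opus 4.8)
The plan is to sandwich the two solutions between the solution $\overline{Y}(\cd)$ of an auxiliary BDSVIE governed by the monotone generator $\overline{f}$, and to reach $\overline{Y}(\cd)$ from each side by a monotone Picard iteration whose individual steps are instances of Proposition \ref{35}. Precisely, let $(\overline{Y}(\cd),\overline{Z}(\cd,\cd))\in\mathcal{H}_{\Delta}^{2}[0,T]$ be the unique solution of the BDSVIE with generator $(\overline{f},g)$ and free term $\psi^{1}(\cd)$; existence and uniqueness follow from the corollary following Theorem \ref{22}, since $\overline{f}$ and $g$ do not involve $Z(s,t)$. The goal is to establish
$$Y^{2}(t)\les\overline{Y}(t)\les Y^{1}(t),\qq \as,\ \ae\ t\in[0,T].$$

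First I would set up the iteration map. For $v(\cd)\in L_{\mathcal{F}_{T}}^{2}(0,T;\mathbb{R})$ let $\Phi(v)(\cd)$ be the first component of the unique solution in $\mathcal{H}_{\Delta}^{2}[0,T]$ of the BDSVIE
$$\ba{ll}\ds\hat{Y}(t)=\psi^{1}(t)+\int_{t}^{T}\overline{f}(t,s,v(s),\hat{Z}(t,s))\,ds\\ \ns\ds\qq+\int_{t}^{T}g(t,s,\hat{Z}(t,s))\,d\overleftarrow{B}(s)-\int_{t}^{T}\hat{Z}(t,s)\,dW(s),\ea$$
which is again well posed by the corollary following Theorem \ref{22}, because with $v(\cd)$ frozen the $ds$-generator $\overline{f}(t,s,v(s),\cd)$ depends only on the $Z(t,s)$-slot. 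Applying the a priori estimate \rf{20} to the difference of the two equations obtained from $v$ and $v'$, together with the Lipschitz bounds of (H2) and the $\b$-weight, one gets $\|\Phi(v)-\Phi(v')\|_{\b}^{2}\les\frac{C}{\b}\|v-v'\|_{\b}^{2}$ in the norm $\|v\|_{\b}^{2}=\dbE\int_{0}^{T}e^{\b t}|v(t)|^{2}dt$, exactly as in the proof of Theorem \ref{22}; hence for $\b$ large enough $\Phi$ is a contraction, its unique fixed point is precisely $\overline{Y}(\cd)$, and $\Phi^{n}(v)\to\overline{Y}$ in this norm from any starting point.

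Next I would run the iteration from above and from below. Put $\overline{Y}^{(0)}:=Y^{1}$ and $\overline{Y}^{(n+1)}:=\Phi(\overline{Y}^{(n)})$. Since $Y^{1}(\cd)$ itself solves a BDSVIE of the type treated in Proposition \ref{35} — with $Z(t,s)$-generator $f^{1}(t,s,Y^{1}(s),\cd)$, backward coefficient $g$ and free term $\psi^{1}$ — and since $f^{1}(t,s,Y^{1}(s),z)\ges\overline{f}(t,s,Y^{1}(s),z)$ for $t\les s$, Proposition \ref{35} gives $\overline{Y}^{(1)}(t)\les Y^{1}(t)$. Inductively, $\overline{Y}^{(n)}\les\overline{Y}^{(n-1)}$ together with the monotonicity of $\overline{f}$ in $y$ gives $\overline{f}(t,s,\overline{Y}^{(n)}(s),z)\les\overline{f}(t,s,\overline{Y}^{(n-1)}(s),z)$, so Proposition \ref{35} again yields $\overline{Y}^{(n+1)}\les\overline{Y}^{(n)}$; since $\overline{Y}^{(n)}=\Phi^{n}(Y^{1})\to\overline{Y}$ in the $\b$-norm (hence a.e.\ along a subsequence, which preserves the inequalities), we obtain $\overline{Y}(t)\les Y^{1}(t)$. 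Symmetrically, with $\widetilde{Y}^{(0)}:=Y^{2}$ and $\widetilde{Y}^{(n+1)}:=\Phi(\widetilde{Y}^{(n)})$, the first step uses $\psi^{1}\ges\psi^{2}$ and $\overline{f}\ges f^{2}$ (with the same backward coefficient $g$) in Proposition \ref{35} to give $\widetilde{Y}^{(1)}\ges Y^{2}$, the monotonicity of $\overline{f}$ in $y$ propagates $\widetilde{Y}^{(n+1)}\ges\widetilde{Y}^{(n)}$, and in the limit $\overline{Y}(t)\ges Y^{2}(t)$. Combining the two chains gives $Y^{1}(t)\ges\overline{Y}(t)\ges Y^{2}(t)$ for a.e.\ $t$, a.s., which is the claim.

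The main obstacle is structural rather than computational: the only comparison tool available here (Proposition \ref{35}, which rests on the BDSDE comparison Proposition \ref{4}) applies only when the two equations being compared carry the same coefficient in the backward It\^{o} integral, so one cannot run the comparison directly between the equations for $Y^{1}$ and $Y^{2}$; the monotone intermediate generator $\overline{f}$ together with the Picard scheme — in which every comparison step is against $\overline{Y}^{(n-1)}$ (or, at the initial step, against $Y^{1}$ or $Y^{2}$), always with the common backward coefficient $g$ — is exactly the device that circumvents this, the monotonicity of $\overline{f}$ in $y$ being what carries the inequality through the iteration. The remaining points — deriving the contraction constant from \rf{20} and justifying the a.e.\ passage to the limit of the two monotone sequences — are routine.
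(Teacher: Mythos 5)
Your proposal is essentially the paper's own argument: the paper likewise introduces the intermediate monotone generator $\overline{f}$, runs a monotone Picard iteration $\widetilde{Y}_{k}$ (started at $Y^{1}$, with the $y$-slot of both $\overline{f}$ and $g$ frozen at the previous iterate) whose steps are compared via Proposition \ref{35}, proves convergence through the contraction estimate derived from \rf{20}, identifies the limit with $\overline{Y}$ by uniqueness, and then argues symmetrically from the $Y^{2}$ side — the only cosmetic differences being that the paper uses an intermediate free term $\overline{\psi}$ with $\psi^{1}\ges\overline{\psi}\ges\psi^{2}$ where you take $\psi^{1}$ itself, and that you should write the backward coefficient as $g(t,s,v(s),\hat{Z}(t,s))$ rather than dropping its $y$-argument, since the theorem's $g$ depends on $y$. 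With that notational repair your proof matches the paper's, including sharing its unremarked-upon subtlety that successive iterates carry backward coefficients frozen at different $y$-values.
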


\begin{proof}
Let $\overline{\psi}(\cdot)\in L_{\mathcal{F}_{T}}^2(0,T;\mathbb{R})$ such that
\begin{equation*}
  \psi^{1}(t)\ges \overline{\psi}(t)\ges \psi^{2}(t), \qq \text{a.s.,  a.e.} \   t\in[0,T].
\end{equation*}
It is easy to see that the following equation
\begin{equation*}
  \overline{Y}(t)=\overline{\psi}(t) + \int_t^T \overline{f}(t,s,\overline{Y}(s),\overline{Z}(t,s)) ds
   + \int_t^T g(t,s,\overline{Y}(s),\overline{Z}(t,s)) d\overleftarrow{B}(s)  - \int_t^T \overline{Z}(t,s) dW(s)
\end{equation*}
admits a unique solution $(\overline{Y}(\cdot),\overline{Z}(\cdot,\cdot))\in \mathcal{H}_{\Delta}^{2}[0,T]$.
Set $\widetilde{Y}_{0}(\cdot)=Y^{1}(\cdot)$ and consider the following BDSVIE:
\begin{equation*}
  \widetilde{Y}_{1}(t)=\overline{\psi}(t) + \int_t^T \overline{f}(t,s,\widetilde{Y}_{0}(s),\widetilde{Z}_{1}(t,s)) ds
   + \int_t^T g(t,s,\widetilde{Y}_{0}(s),\widetilde{Z}_{1}(t,s)) d\overleftarrow{B}(s)  - \int_t^T \widetilde{Z}_{1}(t,s) dW(s).
\end{equation*}
Let $(\widetilde{Y}_{1}(\cdot),\widetilde{Z}_{1}(\cdot,\cdot))\in \mathcal{H}_{\Delta}^{2}[0,T]$ be the unique solution to the above equation. Since
$$\left\{\ba{ll}
\ds   f^{1}(t,s,\widetilde{Y}_{0}(s),z)\ges\overline{f}(t,s,\widetilde{Y}_{0}(s),z),
      \q  (t,z)\in [0,s]\times \mathbb{R},  \q \text{a.s., \ a.e.} \ s\in[0,T];\\
\ns\ns\ds  \psi^{1}(t) \ges \overline{\psi}(t),\q \text{a.s., \ a.e.} \  t\in[0,T].
\ea\right.$$
From Proposition \ref{35}, we obtain that
\begin{equation*}
  Y^{1}(t)=\widetilde{Y}_{0}(t) \ges \widetilde{Y}_{1}(t), \ \ \ \text{a.s., \ a.e.} \  t\in[0,T].
\end{equation*}
Next, we consider the following BDSVIE:
\begin{equation*}
  \widetilde{Y}_{2}(t)=\overline{\psi}(t) + \int_t^T \overline{f}(t,s,\widetilde{Y}_{1}(s),\widetilde{Z}_{2}(t,s)) ds
   + \int_t^T g(t,s,\widetilde{Y}_{1}(s),\widetilde{Z}_{2}(t,s)) d\overleftarrow{B}(s)  - \int_t^T \widetilde{Z}_{2}(t,s) dW(s).
\end{equation*}
We see that the above equation admits a unique solution in $\mathcal{H}_{\Delta}^{2}[0,T]$ and denote it by
 $(\widetilde{Y}_{2}(\cdot),\widetilde{Z}_{2}(\cdot,\cdot))$. Since $y\mapsto \overline{f}(t,s,y,z)$ is increasing, we have
\begin{equation*}
   \overline{f}(t,s,\widetilde{Y}_{0}(s),z) \ges \overline{f}(t,s,\widetilde{Y}_{1}(s),z),
    \q (t,z)\in [0,s]\times \mathbb{R},  \q \text{a.s., \  a.e.} \ s\in[0,T].
\end{equation*}
Hence, similar to the above discussion, we obtain
\begin{equation*}
  \widetilde{Y}_{1}(t)\ges \widetilde{Y}_{2}(t), \q \text{a.s., \ a.e.} \ t\in[0,T].
\end{equation*}
By induction, we can construct a sequence
$\{(\widetilde{Y}_{k}(\cdot),\widetilde{Z}_{k}(\cdot,\cdot))\}_{k\ges 1}\in \mathcal{H}_{\Delta}^{2}[0,T]$
such that
\begin{equation*}
\begin{split}
  \widetilde{Y}_{k}(t)=&\overline{\psi}(t) + \int_t^T \overline{f}(t,s,\widetilde{Y}_{k-1}(s),\widetilde{Z}_{k}(t,s)) ds\\
   &+ \int_t^T g(t,s,\widetilde{Y}_{k-1}(s),\widetilde{Z}_{k}(t,s)) d\overleftarrow{B}(s)
   - \int_t^T \widetilde{Z}_{k}(t,s) dW(s),\qq t\in[0,T].
\end{split}
\end{equation*}
Similar to the above discussion, we deduce that
\begin{equation*}
  Y^{1}(t)=\widetilde{Y}_{0}(t)\ges \widetilde{Y}_{1}(t)\ges \widetilde{Y}_{2}(t)\ges \cdots \ges \widetilde{Y}_{k}(t)\ges \cdots, \q\text{ a.s., \ a.e.} \ t\in[0,T].
\end{equation*}
 In the following we show that the sequence $\{(\widetilde{Y}_{k}(\cdot),\widetilde{Z}_{k}(\cdot,\cdot))\}_{k\ges 1}$  is Cauchy in Banach space $\mathcal{H}_{\Delta}^{2}[0,T]$. To get this, we introduce an equivalent norm of the space
$\mathcal{H}_{\Delta}^{2}[0,T]$ as follows,
\begin{equation*}
  \| (Y(\cdot),Z(\cdot,\cdot)) \|_{\beta}^{2}
  \deq \dbE\int_0^{T} \bigg[e^{\beta t}|Y(t)|^{2}+\int_t^{T} e^{\beta s}|Z(t,s)|^{2} ds\bigg]dt.
\end{equation*}
Using the estimate (\ref{20}), we obtain
\begin{align}
    \dbE&\int_0^T \bigg[e^{\beta t}|\widetilde{Y}_{k}(t)-\widetilde{Y}_{k-1}(t)|^{2}
      + \int_t^T e^{\beta s}|\widetilde{Z}_{k}(t,s)-\widetilde{Z}_{k-1}(t,s)|^{2} ds \bigg]dt \nonumber\\
 \les&\ \frac{10}{\beta}\dbE\int_0^T \int_t^T e^{\beta s}
      \left|\overline{f}(t,s,\widetilde{Y}_{k-1}(s),\widetilde{Z}_{k}(t,s))
      -\overline{f}(t,s,\widetilde{Y}_{k-2}(s),\widetilde{Z}_{k-1}(t,s))\right|^{2} dsdt \nonumber\\
     &+  \dbE \int_0^T \int_t^T e^{\beta s}\left|g(t,s,\widetilde{Y}_{k-1}(s),\widetilde{Z}_{k}(t,s))
      - g(t,s,\widetilde{Y}_{k-2}(s),\widetilde{Z}_{k-1}(t,s))\right|^{2} dsdt \nonumber\\
     &+  \dbE \int_0^T e^{\beta t}\int_t^T\left|g(t,s,\widetilde{Y}_{k-1}(s),\widetilde{Z}_{k}(t,s))
      - g(t,s,\widetilde{Y}_{k-2}(s),\widetilde{Z}_{k-1}(t,s))\right|^{2} dsdt \nonumber\\
 \les&\ \delta \dbE\int_0^T \bigg[e^{\beta t}|\widetilde{Y}_{k-1}(t)-\widetilde{Y}_{k-2}(t)|^{2}
      + \int_t^T e^{\beta s}|\widetilde{Z}_{k}(t,s)-\widetilde{Z}_{k-1}(t,s)|^{2} ds \bigg]dt,  \label{36}
   \end{align}
where $\delta=\frac{K}{\beta}+\alpha(T+2)$ and $K=10c(T+1)+\alpha$.
Note that $\alpha(T+2)<1$, then by choosing $\beta=\frac{4K}{1-2\alpha(T+2)}$, we have
\begin{align}
    \dbE& \int_0^{T}\bigg[e^{\beta t}|\widetilde{Y}_{k}(t)-\widetilde{Y}_{k-1}(t)|^{2}
        + \int_t^{T} e^{\beta s}|\widetilde{Z}_{k}(t,s)-\widetilde{Z}_{k-1}(t,s)|^{2} ds\bigg] dt \nonumber\\
\les&\  \frac{\epsilon}{1-\epsilon} \dbE \int_0^T e^{\beta t}|\widetilde{Y}_{k-1}(t)-\widetilde{Y}_{k-2}(t)|^{2} dt \nonumber\\
\les&\ (\frac{\epsilon}{1-\epsilon})^{k-2} \dbE \int_0^{T}\bigg[e^{\beta t}|\widetilde{Y}_{2}(t)-\widetilde{Y}_{1}(t)|^{2}
        + \int_t^{T} e^{\beta s}|\widetilde{Z}_{2}(t,s)-\widetilde{Z}_{1}(t,s)|^{2} ds\bigg] dt,  \label{45}
\end{align}
where $\epsilon=\frac{1+2\alpha(T+2)}{4}<\frac{1}{2}$.
It follows that $\{(\widetilde{Y}_{k}(\cdot),\widetilde{Z}_{k}(\cdot,\cdot))\}_{k\ges 1}$  is Cauchy in Banach space
$\mathcal{H}_{\Delta}^{2}[0,T]$.  We denote their limits by $\widetilde{Y}(\cdot)$ and
$\widetilde{Z}(\cdot,\cdot)$. Then $(\widetilde{Y}(\cdot),\widetilde{Z}(\cdot,\cdot))\in \mathcal{H}_{\Delta}^{2}[0,T]$ and
\begin{equation*}
  \lim_{k\rightarrow \infty}\dbE\bigg[\int_0^{T} e^{\beta t}|\widetilde{Y}_{k}(t)-\widetilde{Y}(t)|^{2} dt
      + \int_0^T\int_t^T e^{\beta s}|\widetilde{Z}_{k}(t,s)-\widetilde{Z}(t,s)|^{2} ds dt\bigg]=0.
\end{equation*}
Furthermore, we have
\begin{equation*}
  \widetilde{Y}(t)=\overline{\psi}(t) + \int_t^T \overline{f}(t,s,\widetilde{Y}(s),\widetilde{Z}(t,s)) ds
   + \int_t^T g(t,s,\widetilde{Y}(s),\widetilde{Z}(t,s)) dB(s) - \int_t^T \widetilde{Z}(t,s) dW(s).
\end{equation*}
Hence, by the existence and uniqueness of BDSVIE, we deduce
\begin{equation*}
   Y^{1}(t)=\widetilde{Y}_{0}(t)\ges \widetilde{Y}(t)=\overline{Y}(t), \qq \text{a.s., \ a.e.} \ t\in[0,T].
\end{equation*}
Similarly, we can prove that
\begin{equation*}
 \overline{Y}(t)\ges Y^{2}(t),  \qq \text{a.s., \ a.e.} \ t\in[0,T].
\end{equation*}
Therefore, our conclusion follows.
\end{proof}

\begin{remark}\rm
One may curious that, if the generators $f(\cd)$ and $g(\cd)$ depend on the term $Z(s,t)$, can the comparison theorem still hold? The answer is positive for some special case. However, due to the technical problem, the general case of $f(\cd)$ and $g(\cd)$ depending on $Z(s,t)$ may not be obtained up to now (see Wang and Yong \cite{Yong}, and Wang, Sun and Yong \cite{Wang-Sun-Yong2018}). So we prefer not to discuss this situation here.
\end{remark}

\begin{example}\rm
 Suppose we are facing with the following two BDSVIEs:
 \begin{equation*}
 \begin{split}
   Y^{1}(t)=&\ \psi^{1}(t) + \int_t^T \(|Y^{1}(s)|+|Z^{1}(t,s)|\) ds
            + \int_t^T \cos \big(Z^{1}(t,s)\big) d\overleftarrow{B}(s)- \int_t^T Z^{1}(t,s) dW(s);\\
   Y^{2}(t)=&\ \psi^{2}(t) - \int_t^T \(|Y^{2}(s)|+|Z^{2}(t,s)|\) ds
            + \int_t^T \cos \big(Z^{1}(t,s)\big) d\overleftarrow{B}(s)- \int_t^T Z^{2}(t,s) dW(s),
\end{split}
\end{equation*}
 where $\psi^{1}(t)\ges \psi^{2}(t),$ a.s., $\forall t\in [0,T]$. Then if we choose $\overline{f}(t,s,y,z)=y+z$, according to Theorem \ref{43}, we get
 $$ Y^{1}(t)\ges  Y^{2}(t), \qq \text{a.s.,\ a.e.}\ t\in[0,T].$$
\end{example}

\subsection{BDSVIE with continuous coefficients}

As an application of the comparison theorem, this subsection is dedicated to the study of one-dimensional BDSVIEs with continuous coefficients of the following type:
\begin{equation}\label{39}
   Y(t)=\psi(t) + \int_t^T f(t,s,Y(s),Z(t,s)) ds
         + \int_t^T g(t,s,Y(s),Z(t,s)) d\overleftarrow{B}(s) - \int_t^T Z(t,s) dW(s),\qq t\in[0,T],
\end{equation}
where
$f,g:\Omega\times \Delta\times \mathbb{R}\times \mathbb{R}\rightarrow \mathbb{R}$
are jointly measurable such that for all $(y,z)\in \mathbb{R}\times \mathbb{R}$,
$f(\cdot,\cdot,y,z)$ and $g(\cdot,\cdot,y,z)$  belong to $L_{\dbF}^{2}(\Delta; \mathbb{R})$.
\begin{itemize}
  \item [$\mathbf{(H4)}$]
        There exist some constants $M>0$ and $0<\alpha <\frac{1}{T+2}$ such that for all $(\omega,t,s,y,z)\in \Omega\times \Delta \times\mathbb{R}\times \mathbb{R}$,
        the following items hold:
        \begin{itemize}
          \item [(1)]  Linear growth:  $ |f(t,s,y,z)|\les M(1+|y|+|z|);$
          \item [(2)]  $f(t,s,y,z)$ is continuous with respect to $(t,y,z)$ and $g(t,s,y,z)$ is continuous with respect to $t$;
          \item [(3)]  $ \big|g(t,s,y,z)-g(t,s,y',z')\big|^{2}\les \alpha(|y-y'|^{2} + |z-z'|^{2}).$
        \end{itemize}
\end{itemize}

\begin{theorem}\label{44}\sl
Under the assumption (H4), if $\psi(\cdot)\in S_{\mathcal{F}_{T}}^2(0,T;\mathbb{R})$, then BDSVIE (\ref{39}) has a solution
 $(Y(\cd),Z(\cd,\cd)) \in \mathcal{H}_{\Delta}^{2}[0,T]$. Furthermore, there is a minimal solution  $(Y^{*}(\cd),Z^{*}(\cd,\cd))$ of (\ref{39}) in the sense that, for any other solution $(Y(\cd),Z(\cd,\cd))$ of (\ref{39}), we have
$Y^{*}(t)\les Y(t), \ \text{a.s., a.e.} \ t\in [0,T].$
\end{theorem}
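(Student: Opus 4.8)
The plan is to run the Lepeltier--San Martin approximation scheme (see \cite{San}), using the comparison theorem of the previous subsection as the main tool. First I would approximate $f$ from below by Lipschitz generators: for fixed $(\omega,t,s)$, applying Proposition \ref{40} to $(y,z)\mapsto f(\omega,t,s,y,z)$, set, for $n$ large,
\[
 f_{n}(t,s,y,z)=\inf_{(y',z')\in\mathbb Q\times\mathbb Q}\big\{f(t,s,y',z')+n(|y-y'|+|z-z'|)\big\}.
\]
Each $f_{n}$ is Lipschitz in $(y,z)$, keeps the linear growth bound $|f_{n}(t,s,y,z)|\les M(1+|y|+|z|)$ uniformly in $n$, increases pointwise to $f$, satisfies $f_{n}(t,s,y_{n},z_{n})\to f(t,s,y,z)$ whenever $(y_{n},z_{n})\to(y,z)$, and (being a countable infimum of $\dbF$-measurable maps with a deterministic bound) lies in $L^{2}_{\dbF}(\Delta;\mathbb R)$. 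Thus $(f_{n},g)$ satisfies (H2) with the same $\alpha<\frac1{T+2}$ furnished by (H4)(3), so by Theorem \ref{22} in the form of its corollary (as $f_{n}$ and $g$ do not involve $Z(s,t)$), the BDSVIE \eqref{39} with $f$ replaced by $f_{n}$ has a unique solution $(Y^{n}(\cd),Z^{n}(\cd,\cd))\in\mathcal H_{\Delta}^{2}[0,T]$.

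Next I would show $\{Y^{n}\}$ is nondecreasing and bounded. Since $f_{n+1}\ges f_{n}$ pointwise, the comparison theorem (Theorem \ref{43}) gives $Y^{n+1}(t)\ges Y^{n}(t)$ for a.e.\ $t$, a.s.\ A uniform-in-$n$ bound on $\dbE\int_0^T|Y^{n}(t)|^{2}dt+\dbE\int_0^T\int_t^T|Z^{n}(t,s)|^{2}\,ds\,dt$ then comes from the linear growth of $f_{n}$: passing through the BDSDE representation $Y^{n}(t)=\lambda^{n}(t,t)$ (where It\^o's formula \emph{is} available, unlike for the Volterra equation) and using $|f_{n}(t,s,0,0)|\les M$, one gets a bound depending only on $M,T,\alpha$ and $\dbE\int_0^{T}|\psi(t)|^{2}dt$, not on the ($n$-dependent) Lipschitz constant of $f_{n}$. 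Monotonicity plus this bound give $Y^{n}\uparrow Y^{*}$ in $L^{2}(\Om\times[0,T])$ by dominated convergence, for some limit $Y^{*}$.

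The core of the proof is the passage to the limit. Writing the equation for $(Y^{m}-Y^{n},Z^{m}-Z^{n})$ with $m\ges n$ and running the conditional-expectation estimates of Lemma \ref{19} (there being no It\^o formula for such equations, everything must go through those identities), I would bound $\dbE\int_0^T\int_t^T e^{\beta s}|Z^{m}(t,s)-Z^{n}(t,s)|^{2}\,ds\,dt$ by a constant multiple of $\dbE\int_0^T e^{\beta t}|Y^{m}(t)-Y^{n}(t)|^{2}dt$ plus $\dbE\int_0^T\int_t^T e^{\beta s}|f_{m}(t,s,Y^{m}(s),Z^{m}(t,s))-f_{n}(t,s,Y^{n}(s),Z^{n}(t,s))|^{2}\,ds\,dt$. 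The first term vanishes as $n\to\i$ since $\{Y^{n}\}$ is Cauchy in $L^{2}$; the second vanishes by dominated convergence, using the uniform $L^{2}$-bound, the uniform linear growth of the $f_{n}$, $Y^{n}\to Y^{*}$, and the continuity of $f$ together with part (iv) of Proposition \ref{40}. Hence $\{(Y^{n},Z^{n})\}$ is Cauchy in $\mathcal H_{\Delta}^{2}[0,T]$; calling its limit $(Y^{*},Z^{*})$ and letting $n\to\i$ in the integral equation (using continuity of $f$ and the $\alpha$-Lipschitz property of $g$) shows $(Y^{*},Z^{*})$ solves \eqref{39}. Minimality follows once more from the comparison theorem: for any solution $(Y,Z)\in\mathcal H_{\Delta}^{2}[0,T]$ of \eqref{39} one has $f\ges f_{n}$ pointwise, whence $Y(t)\ges Y^{n}(t)$ a.e.\ $t$, a.s., and letting $n\to\i$ gives $Y\ges Y^{*}$.

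I expect the main obstacle to be the limit passage of the third paragraph: proving the Cauchy property of $\{Z^{n}\}$ with all constants independent of $n$ (which is why one must exploit the uniform linear growth of $f_{n}$ rather than its Lipschitz bound), and then justifying the limit in the nonlinear terms with only $L^{2}$-convergence of $(Y^{n},Z^{n})$ available --- all without an It\^o formula, i.e.\ entirely through the conditional-expectation machinery behind Lemma \ref{19}. A secondary subtlety is the application of the comparison theorem in the monotonicity and minimality steps, since Theorem \ref{43} is formulated with an intermediate generator that is monotone in $y$, which has to be accommodated when $f$ is merely continuous.
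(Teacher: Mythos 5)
Your proposal is correct and follows essentially the same route as the paper: the Lepeltier--San Martin inf-convolution approximation of $f$, well-posedness of each approximating BDSVIE via Theorem \ref{22}, monotonicity and the final minimality statement via the comparison theorem (Theorem \ref{43}), a uniform a priori bound, and a Cauchy argument in $\mathcal{H}_{\Delta}^{2}[0,T]$ carried out entirely through the conditional-expectation estimates of Lemma \ref{19} in the absence of an It\^{o} formula. The only cosmetic difference is that the paper additionally introduces the dominating solution $(U,V)$ driven by $F(t,s,y,z)=M(1+|y|+|z|)$ to obtain the pointwise upper bound $Y^{n}(t)\les U(t)$ used in the dominated-convergence step, whereas you extract the dominating function from monotonicity and the uniform $L^{2}$-bound; both work, and you even flag the same subtlety about the intermediate monotone generator in Theorem \ref{43} that the paper itself passes over silently.
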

For fixed $(\omega,t,s)\in \Omega\times \Delta$, define the sequence $f_{n}(\omega,t,s,y,z)$ associated to $f$ as in Proposition \ref{40} by:
\begin{equation*}
  f_{n}(\omega,t,s,y,z)\deq\inf_{y',z'\in \mathbb{Q}} \bigg\{f(\omega,t,s,y',z')+n(|y-y'|+|z-z'|)\bigg\},
\end{equation*}
then, for $n\ges M$, $f_{n}$ is jointly measurable and uniformly linear growth in $y,z$ with constant $M$.
We also define the following function,
\begin{equation*}
  F(\omega,t,s,y,z) = M(1+|y| + |z|).
\end{equation*}
Given $\psi(\cdot)\in S_{\mathcal{F}_{T}}^2(0,T;\mathbb{R})$, from Theorem \ref{22}, there exist two pairs of processes
$(Y^{n},Z^{n})\in \mathcal{H}_{\Delta}^{2}[0,T]$ and $(U,V)\in \mathcal{H}_{\Delta}^{2}[0,T]$, which are the solutions to the following
Equations, respectively,
\begin{equation}\label{41}
   Y^{n}(t)=\psi(t) + \int_t^T f_{n}(t,s,Y^{n}(s),Z^{n}(t,s)) ds
    + \int_t^T g(t,s,Y^{n}(s),Z^{n}(t,s)) d\overleftarrow{B}(s) - \int_t^T Z^{n}(t,s) dW(s),
\end{equation}
\begin{equation*}
 \q\ \  U(t)=\psi(t) + \int_t^T F(t,s,U(s),V(t,s)) ds
   + \int_t^T g(t,s,U(s),V(t,s)) d\overleftarrow{B}(s) - \int_t^T V(t,s) dW(s). \ \ \ \ \
\end{equation*}
From Proposition \ref{40} and Theorem \ref{43}, we get
\begin{equation*}
  \forall n\ges m\ges M, \qq Y^{m}(t)\les Y^{n}(t)\les U(t), \qq \text{a.s.,\ a.e.} \ t\in [0,T].
\end{equation*}
Note that from the definition, $f_{n}(\cd)$ satisfies the assumption (H2), then
it's easy to obtain the follows result from the estimate (\ref{18}).
\begin{lemma}\rm
There exists a constant $C>0$ depending only on $M,\ c,\ \alpha,\ T$ and $\psi$, such that
\begin{equation*}
  \forall n\ges M, \qq \|(Y^{n},Z^{n})\|_{\mathcal{H}_{\Delta}^{2}[0,T]}\les C, \qq \|(U,V)\|_{\mathcal{H}_{\Delta}^{2}[0,T]}\les C.
\end{equation*}
\end{lemma}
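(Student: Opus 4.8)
The plan is to read the bound off directly from the a priori estimate \rf{18} of Theorem \ref{22}, applied once to equation \rf{41} and once to the equation defining $(U,V)$, the only delicate point being that the constants occurring in \rf{18} can be taken independent of $n$.

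First I would record the structure of the two generators. For every $n\ges M$, Proposition \ref{40} gives that $f_n(\om,t,s,\cd,\cd)$ has linear growth with constant $M$ and is Lipschitz with constant $M$ (parts (i) and (iii)), both uniformly in $(\om,t,s)$ and in $n$; in particular $f_{n,0}(t,s):=f_n(t,s,0,0)$ satisfies $|f_{n,0}(t,s)|\les M$ on $\D$, and $f_n$ obeys the $f$-part of (H2) with a constant $c=2M^2$ that does not depend on $n$. Likewise $F(\om,t,s,y,z)=M(1+|y|+|z|)$ is globally Lipschitz in $(y,z)$, satisfies the $f$-part of (H2) with the same $c$, and has $F_0\equiv M$. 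The coefficient $g$ is common to both equations and, by (H4)(3) together with $g(\cd,\cd,y,z)\in L^2_\dbF(\D;\dbR)$, satisfies the $g$-part of (H2) with the given $\a<\frac1{T+2}$. Hence, tracing through the proof of Theorem \ref{22}, the admissible weight $\b$ (any $\b>\frac{1-\a(T+2)}{K}$ with $K=10c(T+1)+\a$) and the ensuing multiplicative constant $L$ in \rf{18} depend only on $c,\a,T$, hence only on $M,\a,T$; I fix one such $\b=\b(M,\a,T)$ once and for all.

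Next I would substitute into \rf{18}. Using $\dbE\int_0^T|\psi(t)|^2\,dt\les T\,\dbE\big(\sup_{0\les t\les T}|\psi(t)|^2\big)$ (valid for $\psi\in S^2_{\cF_T}(0,T;\dbR)$), the uniform bound $|f_{n,0}|\les M$, and the finiteness of $\dbE\int_0^T\int_t^T e^{\b s}\|g_0(t,s)\|^2\,ds\,dt$, the right-hand side of \rf{18} applied to \rf{41} is dominated by a finite constant depending only on $M,c,\a,T,\psi$ (and the fixed coefficient $g$), and this constant does not depend on $n$. Since $\b>0$ gives $e^{\b t}\ges1$ and $e^{\b s}\ges1$, the unweighted quantity $\|(Y^n,Z^n)\|_{\mathcal{H}_{\D}^{2}[0,T]}^2$ is no larger than the left-hand side of \rf{18}, hence bounded by that same constant; the identical computation with $F$ and $F_0\equiv M$ in place of $f_n$ and $f_{n,0}$ bounds $\|(U,V)\|_{\mathcal{H}_{\D}^{2}[0,T]}^2$. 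Taking $C$ to be the larger of the two square roots gives the assertion.

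I do not expect a real obstacle: the single thing that genuinely must be verified is the $n$-independence of the weight $\b$ and of the constant $L$ in \rf{18}, which is exactly what the uniform-in-$n$ growth and Lipschitz bounds of Proposition \ref{40} provide. Should one worry about a uniform Lipschitz constant for $f_n$, it is enough to run the a priori estimate using only the linear growth $|f_n(t,s,y,z)|\les M(1+|y|+|z|)$, whose constant $M$ is visibly independent of $n$: feed the solution into the estimate \rf{20} of Lemma \ref{19} with data $f_n(\cd,\cd,Y^n(\cd),Z^n(\cd,\cd))$ and $g(\cd,\cd,Y^n(\cd),Z^n(\cd,\cd))$, bound these by linear growth, and absorb the resulting $Y^n$- and $Z^n$-terms into the left-hand side by enlarging $\b$ in terms of $M,\a,T$ only; additionally, the sandwich $Y^{\lceil M\rceil}(t)\les Y^n(t)\les U(t)$ already furnishes, by itself, the $n$-uniform bound on the $Y^n$-component. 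Either route closes the argument.
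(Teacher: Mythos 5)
Your proposal is correct and follows exactly the route the paper intends: the paper gives no written proof of this lemma, stating only that $f_n$ satisfies (H2) and that the bound is ``easy to obtain'' from the estimate (\ref{18}) of Theorem \ref{22}. Your writeup supplies precisely the details the paper omits --- the $n$-uniformity of the growth/Lipschitz constants via Proposition \ref{40}, hence of the weight $\beta$ and the constant $L$ in (\ref{18}) --- so nothing further is needed.
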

\begin{lemma}\rm
$\{(Y^{n}(\cd),Z^{n}(\cd,\cd))\}_{n\ges1}$ converges in $\mathcal{H}_{\Delta}^{2}[0,T]$.
\end{lemma}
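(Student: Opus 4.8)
The plan is to combine the monotone structure of $\{Y^n\}$ with the a priori estimate \eqref{20} of Lemma \ref{19}. First I would upgrade the pointwise monotone convergence of $\{Y^n\}$ to an $L^2$-convergence: using the sandwich $Y^M(t)\les Y^n(t)\les U(t)$ (a.s., a.e. $t$) already established, $Y^n(t)$ increases to a limit $Y(t)$ with $|Y^n(t)|\les|Y^M(t)|+|U(t)|$, and since the right side lies in $L^2_{\dbF}(0,T;\dbR)$, dominated convergence gives $Y\in L^2_{\dbF}(0,T;\dbR)$ and $\dbE\int_0^T e^{\beta t}|Y^n(t)-Y(t)|^2dt\to0$ as $n\to\infty$. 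It then remains to show that $\{Z^n(\cd,\cd)\}$ is Cauchy in $L^2_{\dbF}(\Delta;\dbR)$, for then $(Y^n,Z^n)\to(Y,Z)$ in $\cH_{\Delta}^2[0,T]$.

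For the $Z$-part, fix $n\ges m\ges M$ and observe that $(Y^n-Y^m,Z^n-Z^m)$ solves a BDSVIE of the form \eqref{2} with zero free term, $ds$-data $\Phi^{n,m}(t,s)=f_n(t,s,Y^n(s),Z^n(t,s))-f_m(t,s,Y^m(s),Z^m(t,s))$ and $d\overleftarrow B$-data $\Psi^{n,m}(t,s)=g(t,s,Y^n(s),Z^n(t,s))-g(t,s,Y^m(s),Z^m(t,s))$. Applying \eqref{20} (and bounding $e^{\beta t}\les e^{\beta s}$ for $t\les s$ to merge the two $g$-terms) gives
$$\dbE\int_0^T e^{\beta t}|Y^n-Y^m|^2 dt+\dbE\int_0^T\int_t^T e^{\beta s}|Z^n-Z^m|^2 ds dt\les\frac{10}{\beta}\dbE\int_0^T\int_t^T e^{\beta s}|\Phi^{n,m}|^2 ds dt+2\dbE\int_0^T\int_t^T e^{\beta s}|\Psi^{n,m}|^2 ds dt.$$
By (H4)(3), $|\Psi^{n,m}(t,s)|^2\les\alpha(|Y^n(s)-Y^m(s)|^2+|Z^n(t,s)-Z^m(t,s)|^2)$, so the $\Psi$-contribution is at most $2\alpha T\,\dbE\int_0^T e^{\beta s}|Y^n(s)-Y^m(s)|^2 ds+2\alpha\,\dbE\int_0^T\int_t^T e^{\beta s}|Z^n-Z^m|^2 ds dt$; the first summand vanishes as $n,m\to\infty$ by the previous paragraph, and, since $2\alpha<1$, the second is absorbed on the left. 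For the $\Phi$-term I would split, for $n\ges m$,
$$\Phi^{n,m}=\big[f_n(t,s,Y^n(s),Z^n(t,s))-f_n(t,s,Y^m(s),Z^m(t,s))\big]+\big[f_n(t,s,Y^m(s),Z^m(t,s))-f_m(t,s,Y^m(s),Z^m(t,s))\big].$$
The first bracket is $\les M\big(|Y^n(s)-Y^m(s)|+|Z^n(t,s)-Z^m(t,s)|\big)$ by the Lipschitz bound of Proposition \ref{40}(iii); for $\beta$ sufficiently large (compatibly with the lower bound already imposed in Theorem \ref{22}) its $|Z^n-Z^m|^2$-term is absorbed on the left, while its $|Y^n-Y^m|^2$-term vanishes as $n,m\to\infty$. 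Since $f_m\les f_n\les f$ for $m\les n$ (Proposition \ref{40}(ii) together with $f_n\les f$), the second bracket is nonnegative and at most $f(t,s,Y^m(s),Z^m(t,s))-f_m(t,s,Y^m(s),Z^m(t,s))$, so everything reduces to showing
$$a_m:=\dbE\int_0^T\int_t^T e^{\beta s}\big|f(t,s,Y^m(s),Z^m(t,s))-f_m(t,s,Y^m(s),Z^m(t,s))\big|^2 ds dt\longrightarrow0.$$

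The convergence $a_m\to0$ is the step I expect to be the main obstacle, since it mixes the $m$-dependence of the nonlinearity $f_m$ with the $m$-dependence of the argument $(Y^m,Z^m)$, and the latter is only controlled in $L^2$ rather than pointwise. For each fixed $(\omega,t,s,y,z)$ one has $f_m(\omega,t,s,y,z)\uparrow f(\omega,t,s,y,z)$ (inf-convolution of a continuous function) with $0\les f-f_m\les 2M(1+|y|+|z|)$ by the uniform linear growth (H4)(1), and the $|Y^m|$-part of the integrand is moreover dominated through the sandwich $|Y^m|\les|Y^M|+|U|$. I would close the argument by a dominated/uniform-integrability argument exploiting this monotone pointwise convergence together with the uniform bound $\|(Y^n,Z^n)\|_{\cH_{\Delta}^2[0,T]}\les C$ (passing if necessary to a subsequence of $\{Z^m\}$, and using the joint continuity of $f$ and the local-uniform convergence $f_m\to f$ to identify the limit), obtaining $a_m\to0$. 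Substituting back, the displayed inequality yields $\dbE\int_0^T\int_t^T e^{\beta s}|Z^n(t,s)-Z^m(t,s)|^2 ds dt\to0$ as $n,m\to\infty$; hence $\{(Y^n,Z^n)\}$ is Cauchy, and therefore convergent, in $\cH_{\Delta}^2[0,T]$.
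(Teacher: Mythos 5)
Your overall skeleton coincides with the paper's: monotone plus dominated convergence gives $L^2$-convergence of $\{Y^n\}$, and the a priori estimate \eqref{20} is then used to transfer this to $\{Z^n\}$. Your treatment of the generator difference is in fact more explicit than the paper's own sketch, which merely asserts a geometric contraction for consecutive differences ``similarly to \eqref{36} and \eqref{45}'' and thereby treats the varying generators $f_n$ as if a single fixed Lipschitz driver were running a Picard iteration; your decomposition of $\Phi^{n,m}$ into a Lipschitz part and the increment $f_n-f_m$ evaluated at a frozen argument correctly isolates exactly what that sketch glosses over.

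However, the step you yourself flag as the main obstacle --- $a_m\to 0$ --- is a genuine gap, and it cannot be closed by the means you propose. The uniform bound $\|(Y^m,Z^m)\|_{\mathcal{H}_{\Delta}^{2}[0,T]}\les C$ gives only $L^2$-boundedness of $Z^m$, not uniform integrability of $|Z^m|^2$; hence the tail contribution $\dbE\int_0^T\int_t^T e^{\beta s}|Z^m(t,s)|^2\mathbf{1}_{\{|Z^m(t,s)|>R\}}\,ds\,dt$, which dominates $a_m$ on the set where the local-uniform (Dini) convergence $f_m\uparrow f$ is not yet effective, need not be small uniformly in $m$. Nor can you extract an a.e.-convergent subsequence of $\{Z^m\}$: strong convergence of $\{Z^m\}$ is precisely what the lemma is trying to establish, so Proposition \ref{40}(iv), which requires $x_m\to x$, is unavailable. (In the classical Lepeltier--San Martin argument this difficulty never arises, because It\^{o}'s formula reduces everything to $\dbE\int (Y^n-Y^m)(f_n-f_m)$, handled by Cauchy--Schwarz using only $L^2$-boundedness of $f_n-f_m$ together with $Y^n-Y^m\to 0$; the absence of an It\^{o} formula for Volterra equations forces recourse to \eqref{20}, which demands the much stronger $L^2$-smallness of the generator difference itself.) To be fair, the paper's own proof is only a sketch and silently assumes away the same issue, so you have located the real difficulty precisely --- but your proposal does not resolve it. A further caveat: your absorption of the first bracket uses the uniform Lipschitz constant $M$ from Proposition \ref{40}(iii); in the source result of Lepeltier and San Martin that constant is $n$, not $M$, and with the correct constant the absorption for a fixed $\beta$ would also fail.
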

\begin{proof}[Sketch of proof]
Let $n_{0} \ges M$.
Since $\{Y^{n}(\cd)\}$ is increasing and bounded in $L_{\dbF}^{2}(0,T;\mathbb{R})$, we infer from the dominated convergence theorem that $Y^{n}(\cd)$ converges in $L_{\dbF}^{2}(0,T;\mathbb{R})$. We denote by $Y(\cd)$ the limit of $\{Y^{n}(\cd)\}$. For $n\ges n_{0}$, by the estimate (\ref{20}), similarly to (\ref{36}) and (\ref{45}), note that $f_{n}(\cd)$ are uniformly linear growth and $\{Y^{n}(\cd),Z^{n}(\cd,\cd)\}$ are bounded, we deduce
\begin{equation*}
\begin{split}
    \dbE&  \int_0^{T} \bigg(e^{\beta t}|Y^{n+1}(t)-Y^{n}(t)|^{2}
        + \int_t^{T} e^{\beta s}|Z^{n+1}(t,s)-Z^{n}(t,s)|^{2} ds\bigg) dt\\
\les& \ \rho \dbE  \int_0^{T} \bigg(e^{\beta t}|Y^{n_{0}+1}(t)-Y^{n_{0}}(t)|^{2}
        + \int_t^{T} e^{\beta s}|Z^{n_{0}+1}(t,s)-Z^{n_{0}}(t,s)|^{2} ds\bigg) dt,\\
\end{split}
\end{equation*}
where $\rho=(\frac{\epsilon}{1-\epsilon})^{n-n_{0}}<1, $ and $\epsilon=\frac{1+2\alpha(T+2)}{4}.$
Let $n\rightarrow \infty$, from which the result follows.
\end{proof}

Now we give the proof of Theorem \ref{44}.

\begin{proof}[Proof of Theorem \ref{44}]
On the one hand, for all $n\ges n_{0}\ges M$, one has $Y^{n_{0}}(t)\leq Y^{n}(t)\leq U(t)$,
 and $\{Y^{n}(t)\}$ converges to $Y(t)$ in $L_{\dbF}^{2}(0,T;\mathbb{R})$, $a.s. \ t\in [0,T].$

\ms

 On the other hand, since $Z^{n}(\cd,\cd)$ converges to $Z(\cd,\cd)$ in $L_{\dbF}^2(\Delta;\mathbb{R}),$ we can assume, choosing a subsequence if needed, that $Z^{n}(t,s)\rightarrow Z(t,s)$ and $\overline{G}\deq\sup_{n}|Z^{n}(t,s)|$ is integrable, a.s., $(t,s)\in\Delta$. Therefore, from (i) and (iv) of Proposition \ref{40}, we get,
\begin{equation*}
\begin{split}
 f_{n}(t,s,Y^{n}(s),Z^{n}(t,s))\xrightarrow { \ n \ }&\ f(t,s,Y(s),Z(t,s)), \ \ \text{a.s.} \ (t,s)\in\Delta,\\
 |f_{n}(t,s,Y^{n}(s),Z^{n}(t,s))| \les&\ M\big(1 + \sup_{n}|Y^{n}(s)| + \sup_{n}|Z^{n}(t,s)|\big)\\
                                    = &\ M\big(1 + \sup_{n}|Y^{n}(s)| + \overline{G}\big).
\end{split}
\end{equation*}
Thus, for $r\in [t,T]$, it holds that
\begin{equation*}
  \int_r^T f_{n}(t,s,Y^{n}(s),Z^{n}(t,s)) ds \xrightarrow { \ n \ } \int_r^T f(t,s,Y(s),Z(t,s)) ds, \ \ \text{a.s.}
\end{equation*}
From the continuity properties of the stochastic integral, we have the following convergence (in probability),
\begin{equation*}
\begin{split}
  &\sup_{t\les r\les T} \bigg| \int_r^T Z^{n}(t,s) dW(s) - \int_r^T Z(t,s) dW(s) \bigg|  \xrightarrow { \ n \ } 0, \\
  &\sup_{t\les r\les T} \bigg| \int_r^T g(t,s,Y^{n}(s),Z^{n}(t,s)) d\overleftarrow{B}(s) - \int_r^T g(t,s,Y(s),Z(t,s)) d\overleftarrow{B}(s)\bigg|  \xrightarrow { \ n \ } 0.
\end{split}
\end{equation*}
Choosing, again, a subsequence, we can assume that the above convergence is $\dbP$-a.s. Finally,
\begin{equation*}
\begin{split}
  |Y^{n}(t)-Y^{m}(t)|
 \les&  \int_t^T \big| f_{n}(t,s,Y^{n}(s),Z^{n}(t,s)) - f_{m}(t,s,Y^{m}(s),Z^{m}(t,s))\big| ds\\
   &+ \bigg| \int_t^T g(t,s,Y^{n}(s),Z^{n}(t,s)) d\overleftarrow{B}(s) - \int_t^T g(t,s,Y^{m}(s),Z^{m}(t,s)) d\overleftarrow{B}(s) \bigg|\\
   &+ \bigg| \int_t^T Z^{n}(t,s) dW(s) - \int_t^T Z^{m}(t,s) dW(s) \bigg|,
\end{split}
\end{equation*}
and taking limits on $m$ and supremum over $t$, we obtain
\begin{equation*}
\begin{split}
     \sup_{0\les t\les T}|Y^{n}(t)-Y(t)|
\les&  \sup_{0\les t\les T} \int_t^T \big| f_{n}(t,s,Y^{n}(s),Z^{n}(t,s)) - f(t,s,Y(s),Z(t,s))\big| ds\\
    &+ \sup_{0\les t\les T}\bigg| \int_t^T g(t,s,Y^{n}(s),Z^{n}(t,s)) d\overleftarrow{B}(s)
     - \int_t^T g(t,s,Y(s),Z(t,s)) d\overleftarrow{B}(s) \bigg|\\
    &+ \sup_{0\les t\les T}\bigg| \int_t^T Z^{n}(t,s) dW(s) - \int_t^T Z(t,s) dW(s) \bigg|, \ \ \dbP\text{-a.s.}
\end{split}
\end{equation*}
From which it follows that $Y^{n}(\cd)$ converges uniformly in $t$ to $Y(\cd)$.
In particular, from (H4) and $\psi(\cdot)\in S_{\mathcal{F}_{T}}^2(0,T;\mathbb{R})$, it's easy to see $Y(\cd)$ is a continuous process. Note that $\{Y^{n}(\cd)\}_{n\ges 1}$ is monotone, therefore, we actually have the uniform convergence for the entire sequence and not just for a subsequence. Taking limits in Eq. (\ref{41}), we deduce that $(Y(\cd),Z(\cd,\cd))\in \mathcal{H}_{\Delta}^{2}[0,T]$ is a solution of (\ref{39}).

\ms

Let $(\widehat{Y}(\cd),\widehat{Z}(\cd,\cd))\in \mathcal{H}_{\Delta}^{2}[0,T]$ be any solution of BDSVIE (\ref{39}).
Then from Theorem \ref{43}, $Y^{n}(t)\les \widehat{Y}(t),$ a.s. $t\in[0,T], \ \forall n\in N$,
and therefore  $Y(t)\les \widehat{Y}(t), \ a.s. \ t\in[0,T],$ which proving that $Y(\cd)$ is the minimal solution. The proof is completed.
\end{proof}

\section{An optimal control problem}

In this section, as an application of BDSVIEs, we study an optimal control problem for doubly stochastic Volterra integral equation. To simplify the presentation, we only discuss the one dimensional case, i.e., $k=l=1$, and we also let $d=1$. Consider the following state equation:
\begin{equation}\label{76}
\begin{split}
  P(t)=&\ \varphi(t) + \int_0^t b(t,s,P(s),Q(s,t),u(s)) ds\\
       &+ \int_0^t \sigma(t,s,P(s),Q(s,t),u(s))dW(s) + \int_0^t Q(t,s) d\overleftarrow{B}(s), \qq t\in [0,T],
\end{split}
\end{equation}
where $u(\cdot)$ is a control process;
$b,\sigma:\Delta^{c}\times \mathbb{R}\times \mathbb{R}\times U\rightarrow \mathbb{R}$ are some given maps and
$\varphi(\cdot)\in L_{\dbF}^2(0,T;\mathbb{R})$; $U$ is a bounded interval in $\mathbb{R}$.
The cost function is defined to be the following Lagrange form:
\begin{equation}\label{77}
  J(u(\cdot))=\dbE \int_0^T h (t,P(t),u(t)) dt,
\end{equation}
where $h:[0,T]\times \mathbb{R}\times U$ is a given map as well.
In the above, all the functions can be random.
We now introduce the following assumption.
It's should be pointed out that the conditions assumed are more than sufficient,
and one can relax many of them. But we prefer these strong conditions to make the presentation simple.
\begin{itemize}
  \item [$\mathbf{(H5)}$]
 Let $b$ and $\sigma$ be continuous in all of their arguments, and differentiable in the variables $p, q$ and $u$, with bounded derivatives.
Also, there exists a constant $C$ such that
\begin{equation*}
  |b(t,s,0,0,u)|+|\sigma(t,s,0,0,u)|\les C, \qq \forall (t,s)\in \Delta^{c}, \ u\in U.
\end{equation*}
\end{itemize}
%
Let
\begin{equation}\label{78}
  \mathcal{U} \deq \{ u:[0,T]\times \Omega \rightarrow U \ | \  u(\cd) \text{ is } \dbF\text{-measurable}, \
  \dbE\int_0^T|u(s)|^2ds<\i \}.
\end{equation}
From Theorem \ref{63}, it is not hard to see that under the assumption (H5), for any $\varphi(\cdot)\in L_{\dbF}^2(0,T;\mathbb{R})$ and $u(\cdot)\in \mathcal{U}$,
Eq. (\ref{76}) has a unique M-solution $(P(\cdot),Q(\cdot,\cdot))\in \mathcal{H}^{2}[0,T]$.
Thus the cost functional $J(u(\cdot))$ is well-defined. Our optimal control problem can be stated as follows:

\ms

$\mathbf{Problem \ (C)}.$ Find a $\overline{u}(\cdot)\in \mathcal{U}$ such that
\begin{equation}\label{79}
  J(\overline{u}(\cdot))=\inf_{\overline{u}(\cdot)\in \mathcal{U}} J(u(\cdot)).
\end{equation}
Any $\overline{u}(\cdot)$ satisfying (\ref{79}) is called an $optimal \ control$ of Problem (C),
the corresponding state process $(\overline{P}(\cdot),\overline{Q}(\cdot,\cdot))$ is called an optimal state process and
$(\overline{u}(\cdot),\overline{P}(\cdot),\overline{Q}(\cdot,\cdot))$ is called an $optimal \ pair$.

\ms

Next result is called the $duality \ principle$ of linear doubly stochastic Volterra integral equation, which plays an important role below.
\begin{theorem}\label{80}\sl
For $i=1,...,4,$, suppose $A_{i}(\cdot,\cdot)\in L^{2}_{\dbF}([0,T]^{2};\mathbb{R})$,
$\varphi(\cdot)\in L_{\dbF}^2(0,T;\mathbb{R})$ and $\psi(\cdot)\in L_{\mathcal{F}_{T}}^2(0,T;\mathbb{R})$.
Let $(P(\cdot),Q(\cdot,\cdot))\in \mathcal{H}^{2}[0,T]$ be the M-solution of the following FDSVIE:
\begin{equation}\label{81}
\begin{split}
  P(t)=& -\varphi(t) + \int_0^t [A_{1}(t,s)P(s) + A_{2}(t,s)Q(s,t)] ds\\
       &  + \int_0^t[A_{3}(t,s)P(s) + A_{4}(t,s)Q(s,t)] dW(s) + \int_0^t q(t,s) d\overleftarrow{B}(s), \qq t\in [0,T],
\end{split}
\end{equation}
and
$(Y(\cdot),Z(\cdot,\cdot))\in \mathcal{H}^{2}[0,T]$ be the M-solution to the following BDSVIE:
\begin{equation}\label{82}
\begin{split}
  Y(t)=& -\psi(t) + \int_t^T [A_{1}(s,t)Y(s) + A_{3}(s,t)Z(s,t)] ds\\
       &+\int_t^T [A_{2}(s,t)Y(s)+A_{4}(s,t)Z(s,t)]d\overleftarrow{B}(s)-\int_t^T Z(t,s)dW(s), \qq t\in [0,T].
\end{split}
\end{equation}
Then the following relation holds:
\begin{equation}\label{83}
  \dbE\int_0^T \psi(t)P(t)dt = \dbE\int_0^T \varphi(t)Y(t) dt.
\end{equation}
\end{theorem}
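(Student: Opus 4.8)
The plan is to prove \eqref{83} by substituting the two equations \eqref{81} and \eqref{82} into its two sides, cancelling a common term, and then reducing the remaining identity to a bookkeeping exercise with the M-solution representations and stochastic Fubini. Write $\psi(t)=R(t)-Y(t)$, where, from \eqref{82}, $R(t)$ is the sum of the three integral terms on the right-hand side of \eqref{82} (the $ds$-term carrying $A_{1},A_{3}$, the $d\overleftarrow{B}$-term carrying $A_{2},A_{4}$, and $-\int_{t}^{T}Z(t,s)\,dW(s)$); similarly write $\varphi(t)=S(t)-P(t)$, where, from \eqref{81}, $S(t)$ is the sum of the $ds$-term (carrying $A_{1},A_{2}$), the $dW$-term (carrying $A_{3},A_{4}$) and $\int_{0}^{t}Q(t,s)\,d\overleftarrow{B}(s)$. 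Multiplying by $P(t)$, respectively $Y(t)$, integrating over $[0,T]$ and taking expectations, the terms $\dbE\int_{0}^{T}Y(t)P(t)\,dt$ appearing on each side cancel, so \eqref{83} is equivalent to
\begin{equation*}
  \dbE\int_{0}^{T}R(t)P(t)\,dt=\dbE\int_{0}^{T}S(t)Y(t)\,dt .
\end{equation*}

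First I would dispose of the ``easy'' terms. Expanding $\dbE\int_{0}^{T}R(t)P(t)\,dt$, the $ds$-part equals $\dbE\int_{0}^{T}\int_{t}^{T}P(t)\big[A_{1}(s,t)Y(s)+A_{3}(s,t)Z(s,t)\big]\,ds\,dt$; a Fubini interchange together with the relabelling $s\leftrightarrow t$ turns the $A_{1}$-piece into $\dbE\int_{0}^{T}Y(t)\int_{0}^{t}A_{1}(t,s)P(s)\,ds\,dt$, which is exactly the $A_{1}$-piece of the $ds$-part of $\dbE\int_{0}^{T}S(t)Y(t)\,dt$, while the $A_{3}$-piece becomes $\dbE\int_{0}^{T}\int_{0}^{t}A_{3}(t,s)P(s)Z(t,s)\,ds\,dt$. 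The purely stochastic term $-\dbE\int_{0}^{T}P(t)\int_{t}^{T}Z(t,s)\,dW(s)\,dt$ vanishes: $P(t)$ is $\cF_{t}$-measurable, hence measurable with respect to $\cG_{t}=\cF^{W}_{t}\vee\cF^{B}_{0,T}$, the integrand $Z(t,\cdot)$ on $\Delta$ is $\cG$-adapted, and $W$ is a Brownian motion for the filtration $\{\cG_{s}\}$, so the inner integral has zero $\cG_{t}$-conditional expectation. Symmetrically, $\dbE\int_{0}^{T}Y(t)\int_{0}^{t}Q(t,s)\,d\overleftarrow{B}(s)\,dt=0$, since $Y(t)$ is $\cF_{t}$-measurable and the Brownian increments of $B$ over $[0,t]$ that drive this backward integral are independent of $\cF_{t}$ and of the relevant $\sigma$-fields of the integrand.

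The core of the proof is the two remaining terms, in which the stochastic integral is correlated with the factor multiplying it; here one invokes the M-solution property. For the $d\overleftarrow{B}$-term of $R$, insert the representation $P(t)=\dbE[P(t)|\cF_{T}]+\int_{t}^{T}Q(t,s)\,d\overleftarrow{B}(s)$ (this is \eqref{56} with $S=T$, noting $\cF_{T}=\cF^{W}_{T}$). The part carrying $\dbE[P(t)|\cF_{T}]$, being $\cF^{W}_{T}$-measurable, is orthogonal to every backward $B$-integral and drops; the backward It\^o isometry then gives $\dbE\,P(t)\int_{t}^{T}\big[A_{2}(s,t)Y(s)+A_{4}(s,t)Z(s,t)\big]\,d\overleftarrow{B}(s)=\dbE\int_{t}^{T}Q(t,s)\big[A_{2}(s,t)Y(s)+A_{4}(s,t)Z(s,t)\big]\,ds$. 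In the same way, for the $dW$-term of $S$ one inserts $Y(t)=\dbE[Y(t)|\cF_{0}]+\int_{0}^{t}Z(t,s)\,dW(s)$ (this is \eqref{54} with $S=0$), drops the $\cF_{0}$-orthogonal part, and applies the It\^o isometry to obtain $\dbE\,Y(t)\int_{0}^{t}\big[A_{3}(t,s)P(s)+A_{4}(t,s)Q(s,t)\big]\,dW(s)=\dbE\int_{0}^{t}Z(t,s)\big[A_{3}(t,s)P(s)+A_{4}(t,s)Q(s,t)\big]\,ds$. At this point every surviving term is an ordinary double Lebesgue integral, and a last round of Fubini plus the relabelling $s\leftrightarrow t$ pairs them off: the $A_{1}PY$-terms (done above), the $A_{2}QY$-terms, the $A_{3}PZ$-terms (which coincide without any interchange), and the $A_{4}QZ$-terms. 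Their difference is $0$, which is the displayed identity, hence \eqref{83}.

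I expect the main obstacle to be the adaptedness bookkeeping intrinsic to the doubly stochastic Volterra setting: one must keep careful track of forward-in-$W$ versus backward-in-$B$ measurability of every integrand, identify the filtration under which each diagonal stochastic term has vanishing conditional expectation (the filtration $\cG$ for $W$-integrals and its backward analogue $\cF^{W}_{T}\vee\cF^{B}_{\cdot,T}$ for $B$-integrals), and justify both the stochastic Fubini interchanges and the two isometry/orthogonality identities for the Volterra cross-terms, which involve the off-diagonal values $Z(s,t)$ for $(s,t)\in\Delta^{c}$ and $Q(s,t)$ for $(s,t)\in\Delta$ defined only through the M-solution relations \eqref{54} and \eqref{56}. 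The integrability needed for each Fubini exchange follows from $(Y,Z),(P,Q)\in\mathcal{H}^{2}[0,T]$ and the Cauchy--Schwarz inequality, and the isometry and orthogonality steps can, if one wants full rigour, be reduced to step-process integrands by the standard density argument, using the a priori bounds \eqref{18} and \eqref{66}.
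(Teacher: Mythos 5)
Your proposal is correct and follows essentially the same route as the paper's proof: substitute the two equations, use Fubini and relabelling for the $ds$-terms, kill the two pure-noise cross terms by (backward) martingale orthogonality, and convert the remaining cross terms via the M-solution representations \eqref{54} and \eqref{56} together with the forward and backward It\^{o} isometries. The only difference is organizational (you cancel $\dbE\int_0^T Y(t)P(t)\,dt$ from both sides and pair terms symmetrically, while the paper runs a one-directional chain of equalities), and your version is if anything more explicit about the filtration bookkeeping that the paper leaves implicit.
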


\begin{proof}
Observe the following,
$$\2n\ba{ll}
\ds \dbE\int_0^T \psi(t)P(t)dt\\
\ns\ds = \dbE\int_0^T \bigg( -Y(t) + \int_t^T [A_{1}(s,t)Y(s) + A_{3}(s,t)Z(s,t)] ds\\
\ns\ds\qq\qq\ \ +\int_t^T[A_{2}(s,t)Y(s)+A_{4}(s,t)Z(s,t)]d\overleftarrow{B}(s)-\int_t^T Z(t,s)dW(s)\bigg)P(t)dt\\
\ns\ds= \dbE\int_0^T Y(t)\bigg(-P(t)+\int_0^t A_{1}(t,s)P(s)ds\bigg)dt
    +\dbE\int_0^T\int_0^t A_{3}(t,s)Z(t,s)P(s)dsdt\\
\ns\ds\q +\dbE\int_0^T \bigg(\int_t^T [A_{2}(s,t)Y(s) + A_{4}(s,t)Z(s,t)] d\overleftarrow{B}(s)\bigg)P(t)dt  \\
\ns\ds= \dbE\int_0^T Y(t)\bigg(-P(t)+\int_0^t A_{1}(t,s)P(s)ds\bigg)dt
    +\dbE\int_0^T \bigg(\int_0^t Z(t,s) dW(s)\bigg) \cdot \bigg(\int_0^t A_{3}(t,s)P(s)dW(s)\bigg)dt\\
\ns\ds\q +\dbE\int_0^T \bigg(\int_t^T [A_{2}(s,t)Y(s) + A_{4}(s,t)Z(s,t)]
   d\overleftarrow{B}(s)\bigg)\cdot\bigg(\dbE[P(t)]+\int_t^T Q(t,s) d\overleftarrow{B}(s)\bigg) dt \\
\ns\ds = \dbE\int_0^T Y(t)\bigg(-P(t)+\int_0^t A_{1}(t,s)P(s)ds\bigg)dt
    +\dbE\int_0^T \big(Y(t)- \dbE[Y(t)]\big) \cdot \bigg(\int_0^t A_{3}(t,s)P(s)dW(s)\bigg)dt\\
\ns\ds\q +\dbE\int_0^T Y(t)\bigg(\int_0^t A_{2}(t,s)Q(s,t)ds\bigg)dt+\dbE\int_0^T \int_0^t A_{4}(t,s)Z(t,s)Q(s,t)dsdt \\
\ns\ds=\dbE\int_0^T Y(t)\bigg(-P(t)+\int_0^t[A_{1}(t,s)P(s)+A_{2}(t,s)Q(s,t)]ds+\int_0^tA_{3}(t,s)P(s)dW(s)\bigg)dt \\
\ns\ds\q + \dbE\int_0^T \bigg(\int_0^t Z(t,s) dW_{s}\bigg) \cdot \bigg(\int_0^t A_{4}(t,s)Q(s,t) dW(s)\bigg)dt \\
\ns\ds= \dbE\int_0^T Y(t)\bigg(-P(t)+\int_0^t [A_{1}(t,s)P(s)+A_{2}(t,s)Q(s,t)]ds\\
\ns\ds\qq\qq\qq\ \ + \int_0^t [A_{3}(t,s)P(s)+ A_{4}(t,s)Q(s,t)] dW(s)\bigg)dt
  =\dbE\int_0^T \varphi(t)Y(t) dt.
\ea$$
Thus the relation (\ref{83}) holds.
\end{proof}

Based on the above duality principle, we could establish the following theorem called Pontryagin's maximum principle.
For simplicity, denote
\begin{equation*}
  \overline{b}_{i}(s,t)=b_{i}(s,t,\overline{P}(t),\overline{Q}(t,s),\overline{u}(t)), \qq
  \overline{\sigma}_{i}(s,t)=\sigma_{i}(s,t,\overline{P}(t),\overline{Q}(t,s),\overline{u}(t)),\qq t,s\in[0,T],
\end{equation*}
where $\overline{b}_{i}(\cd)$ (or $\overline{\sigma}_{i}(\cd)$) denotes the partial derivative of $\overline{b}(\cd)$ (or $\overline{\sigma}_{i}(\cd)$) at $i$ with $i=\bar{p},\bar{q},\bar{u}$, respectively.
\begin{theorem}\label{TH5} \sl
Let the assumption (H5) hold,
and let $(\overline{P}(\cdot),\overline{Q}(\cdot,\cdot),\overline{u}(\cdot))$ be an optimal pair of Problem (C).
Then there exists a unique M-solution $(Y(\cdot),Y_{0}(\cdot);$ $Z(\cdot,\cdot),Z_{0}(\cdot,\cdot))$ of the following BDSVIE:
\bel{84}\left\{\ba{ll}
\ds Y(t)=h_{p}(t,\overline{P}(t),\overline{u}(t)) + \int_t^T \big[\overline{b}_{p}(s,t)Y(s) +\overline{\sigma}_{p}(s,t)Z(s,t)\big] ds\\
\ns\ns\ds\qq\q\ +\int_t^T \big[\overline{b}_{q}(s,t)Y(s)
+\overline{\sigma}_{q}(s,t)Z(s,t)\big] d\overleftarrow{B}(s)- \int_t^T Z(t,s) dW(s),\\
\ns\ns\ds Y_{0}(t)= \int_t^T \big[\overline{b}_{u}(s,t)Y(s) +\overline{\sigma}_{u}(s,t)Z(s,t)\big] ds - \int_t^T Z_{0}(t,s) dW(s),
\ea\right.\ee
such that
\begin{equation}\label{85}
   \big[Y_{0}(t) + h_{u}(t,\overline{p}(t),\overline{u}(t))\big] \big(u - \overline{u}(t) \big) \geq 0, \qq \forall u\in U, \ t\in[0,T], \ a.s.
\end{equation}
\end{theorem}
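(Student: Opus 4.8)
The plan is to follow the classical Pontryagin variational scheme, adapted to the doubly stochastic Volterra setting and using the duality principle of Theorem \ref{80} in place of It\^o's formula. First I would fix the optimal control $\overline u(\cdot)$ and, for any admissible $u(\cdot)\in\mathcal U$ and $\e\in(0,1)$, form the spike (or convex) perturbation $u^\e(\cdot)=\overline u(\cdot)+\e(u(\cdot)-\overline u(\cdot))$; since $U$ is a bounded interval, $u^\e(\cdot)\in\mathcal U$. Let $(P^\e(\cdot),Q^\e(\cdot,\cdot))$ be the corresponding M-solution of the state equation \rf{76}. Using Theorem \ref{63} together with the smoothness and boundedness hypotheses in (H5), I would establish the first-order Taylor expansion $P^\e(t)=\overline P(t)+\e\,\Pi(t)+o(\e)$ in $\mathcal H^2[0,T]$, where $(\Pi(\cdot),\Theta(\cdot,\cdot))$ is the M-solution of the linear variational FDSVIE obtained by formally differentiating \rf{76}:
\begin{equation*}
\begin{split}
  \Pi(t)=&\ \int_0^t\big[\overline b_p(t,s)\Pi(s)+\overline b_q(t,s)\Theta(s,t)+\overline b_u(t,s)(u(s)-\overline u(s))\big]\,ds\\
  &+\int_0^t\big[\overline\sigma_p(t,s)\Pi(s)+\overline\sigma_q(t,s)\Theta(s,t)+\overline\sigma_u(t,s)(u(s)-\overline u(s))\big]\,dW(s)+\int_0^t\Theta(t,s)\,d\overleftarrow B(s).
\end{split}
\end{equation*}
The well-posedness of this linear equation is immediate from Theorem \ref{63}; the estimate \rf{66} gives the needed bounds to justify $o(\e)$ remainders via Gronwall-type arguments on the difference $\frac1\e(P^\e-\overline P)-\Pi$.

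Next I would differentiate the cost: from \rf{77} and (H5),
\begin{equation*}
  0\le\lim_{\e\downarrow0}\frac{J(u^\e(\cdot))-J(\overline u(\cdot))}{\e}
   =\dbE\int_0^T\big[h_p(t,\overline P(t),\overline u(t))\Pi(t)+h_u(t,\overline P(t),\overline u(t))(u(t)-\overline u(t))\big]\,dt.
\end{equation*}
The task is then to rewrite the term $\dbE\int_0^T h_p(t,\overline P(t),\overline u(t))\Pi(t)\,dt$ in terms of $u(\cdot)-\overline u(\cdot)$ alone, so that arbitrariness of $u$ yields a pointwise inequality. This is exactly where Theorem \ref{80} enters: take $\varphi(\cdot)$ in \rf{81} to carry the forcing $\overline b_u(t,s)(u(s)-\overline u(s))$ and $\overline\sigma_u(t,s)(u(s)-\overline u(s))$ so that the FDSVIE \rf{81} coincides with the variational equation for $\Pi$, set $A_1=\overline b_p$, $A_2=\overline b_q$, $A_3=\overline\sigma_p$, $A_4=\overline\sigma_q$, and choose $\psi(\cdot)=h_p(\cdot,\overline P(\cdot),\overline u(\cdot))$; the adjoint BDSVIE \rf{82} is then precisely the equation for $(Y(\cdot),Z(\cdot,\cdot))$ in \rf{84}, whose unique solvability follows from Theorem \ref{22} under (H5). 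Applying the duality relation \rf{83} converts $\dbE\int_0^T h_p\Pi\,dt$ into an expression $\dbE\int_0^T[\,\cdot\,](u(t)-\overline u(t))\,dt$; collecting the resulting integrand and absorbing the $\overline b_u,\overline\sigma_u$ contributions into the auxiliary process $Y_0(\cdot)$ defined by the second line of \rf{84} (which is a well-posed BSVIE once $(Y,Z)$ is known) gives
\begin{equation*}
  0\le\dbE\int_0^T\big[Y_0(t)+h_u(t,\overline P(t),\overline u(t))\big]\big(u(t)-\overline u(t)\big)\,dt .
\end{equation*}
Finally, since $u(\cdot)\in\mathcal U$ is arbitrary and one may localize by choosing $u$ equal to $\overline u$ off an arbitrary measurable set, a standard measurable-selection / Lebesgue-point argument promotes this integral inequality to the pointwise variational inequality \rf{85}.

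I expect the main obstacle to be the rigorous justification of the first-order expansion $P^\e=\overline P+\e\Pi+o(\e)$ in $\mathcal H^2[0,T]$: because the coefficients $b,\sigma$ in \rf{76} depend on $Q(s,t)$ (the ``off-diagonal'' values of the second component), the remainder estimates cannot rely on It\^o's formula and must be carried out by the $e^{\beta t}$-weighted norm technique of Lemma \ref{19} and Theorem \ref{63}, tracking both the $\Delta$ and $\Delta^c$ parts of $Q$ and using the constraint $\alpha<\frac1{T+2}$ to close the estimates. A secondary technical point is checking that the data $A_i(\cdot,\cdot)=\overline b_p,\overline b_q,\overline\sigma_p,\overline\sigma_q$ and $\psi=h_p(\cdot,\overline P(\cdot),\overline u(\cdot))$ indeed lie in the spaces required by Theorem \ref{80} — this is where the boundedness of the derivatives in (H5) and the a priori integrability of $\overline P$ from Theorem \ref{63} are used — and that the M-solution constraints \rf{54}, \rf{56} are compatible on both sides of the duality so that \rf{83} applies verbatim.
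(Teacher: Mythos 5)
Your proposal follows essentially the same route as the paper's proof: a convex perturbation $u^\e=\overline u+\e(u-\overline u)$, passage to the variational FDSVIE for $\Pi=\lim_\e\frac{1}{\e}(P^\e-\overline P)$, differentiation of the cost, an application of the duality principle of Theorem \ref{80} with $A_1=\overline b_p$, $A_2=\overline b_q$, $A_3=\overline\sigma_p$, $A_4=\overline\sigma_q$ and the forcing carried by $\varphi$, and finally the arbitrariness of $u$ to localize the integral inequality into \rf{85}. The only discrepancies are cosmetic (e.g.\ the signs of $\varphi$ and $\psi$ must be flipped to match the conventions of \rf{81}--\rf{82}, which have $-\varphi$ and $-\psi$ as free terms), so the argument is correct and matches the paper.
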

Note that the second equation in (\ref{84}) is in fact a BSVIE in which the free term is zero, and the drift term does not depend on
$(Y_{0}(\cdot),Z_{0}(\cdot,\cdot))$.

\begin{proof}[Proof of Theorem \ref{TH5}]
Take any $u(\cdot)\in \mathcal{U}$.
Since $\mathcal{U}$ is convex, then for any $\varepsilon \in (0,1)$,
\begin{equation*}
  u_{\varepsilon}(\cdot)\deq \overline{u}(\cdot) + \varepsilon[u(\cdot) - \overline{u}(\cdot)] \in \mathcal{U}.
\end{equation*}
Let $(P_{\varepsilon}(\cdot),Q_{\varepsilon}(\cdot,\cdot))$ be the M-solution of (\ref{76}) corresponding to $ u_{\varepsilon}(\cdot)$. Now for $t,s\in[0,T]$, define
\begin{equation*}
  \xi_{\varepsilon}(t)\deq \frac{P_{\varepsilon}(t)-\overline{P}(t)}{\varepsilon}, \ \ \ \
 \eta_{\varepsilon}(t,s)\deq \frac{Q_{\varepsilon}(t,s)-\overline{Q}(t,s)}{\varepsilon}. \ \
\end{equation*}
Then $(\xi_{\varepsilon}(\cdot),\eta_{\varepsilon}(\cdot,\cdot))\rightarrow (\xi(\cdot),\eta(\cdot,\cdot))$
in $\mathcal{H}^{2}[0,T]$ with $(\xi(\cdot),\eta(\cdot,\cdot))$ satisfying the following equation:
\begin{equation*}\label{}
\begin{split}
  \xi(t)=& \int_0^t \bigg[b_{p}(t,s,\overline{P}(s),\overline{Q}(s,t),\overline{u}(s))\xi(s)
         + b_{q}(t,s,\overline{P}(s),\overline{Q}(s,t),\overline{u}(s))\eta(s,t)\\
      & \ \ \ \ \ \ \ \ \ + b_{u}(t,s,\overline{P}(s),\overline{Q}(s,t),\overline{u}(s))\big(u(s) - \overline{u}(s)\big) \bigg] ds\\
        &+ \int_0^t \bigg[\sigma_{p}(t,s,\overline{P}(s),\overline{Q}(s,t),\overline{u}(s))\xi(s)
         + \sigma_{q}(t,s,\overline{P}(s),\overline{Q}(s,t),\overline{u}(s))\eta(s,t) \\
        & \ \ \ \ \ \ \ \ \ + \sigma_{u}(t,s,\overline{P}(s),\overline{Q}(s,t),\overline{u}(s))\big(u(s) - \overline{u}(s)\big) \bigg] dW(s)
         + \int_0^t \eta(t,s) d\overleftarrow{B}(s)\\
  \equiv& \ \overline{\varphi}(t)  + \int_0^t \eta(t,s) d\overleftarrow{B}(s)\\
 & + \int_0^t \bigg[b_{p}(t,s,\overline{P}(s),\overline{Q}(s,t),\overline{u}(s))\xi(s)
         + b_{q}(t,s,\overline{P}(s),\overline{Q}(s,t),\overline{u}(s))\eta(s,t) \bigg] ds\\
 & + \int_0^t \bigg[\sigma_{p}(t,s,\overline{P}(s),\overline{Q}(s,t),\overline{u}(s))\xi(s)
         + \sigma_{q}(t,s,\overline{P}(s),\overline{Q}(s,t),\overline{u}(s))\eta(s,t)  \bigg] dW(s),
\end{split}
\end{equation*}
where
\begin{equation*}
\begin{split}
  \overline{\varphi}(t)=& \int_0^t b_{u}(t,s,\overline{P}(s),\overline{Q}(s,t),\overline{u}(s))\big(u(s) - \overline{u}(s)\big)ds\\
                  & + \int_0^t \sigma_{u}(t,s,\overline{P}(s),\overline{Q}(s,t),\overline{u}(s))\big(u(s) - \overline{u}(s)\big)dW_{s}, \qq t\in[0,T].
\end{split}
\end{equation*}
Now, let  $(Y(\cdot),Y_{0}(\cdot);Z(\cdot,\cdot),Z_{0}(\cdot,\cdot))$ be the unique M-solution to BDSVIE (\ref{84}).
By the optimality of $(\overline{P}(\cdot),\overline{Q}(\cdot,\cdot),\overline{u}(\cdot))$, and the duality principle (Theorem \ref{80}), we have
\begin{equation*}
\begin{split}
0\les& \frac{J(u_{\varepsilon}(\cdot))-J(\overline{u}(\cdot))}{\varepsilon}\\
 \rightarrow& \dbE \int_0^T \left[h_{p}(t,\overline{P}(t),\overline{u}(t))\xi(t)
                               + h_{u}(t,\overline{P}(t),\overline{u}(t))\big(u(s) - \overline{u}(s)\big)\right] dt\\
   =&  \dbE\int_0^T \left(\overline{\varphi}(t)Y(t) + h_{u}(t,\overline{P}(t),\overline{u}(t))\big(u(s) - \overline{u}(s)\big)\right) dt\\
   =&  \dbE\int_0^T \left( \int_0^t b_{u}(t,s,\overline{P}(s),\overline{Q}(s,t),\overline{u}(s))\big(u(s) - \overline{u}(s)\big)ds \right)Y(t)dt\\
    & +\dbE\int_0^T \left( \int_0^t \sigma_{u}(t,s,\overline{P}(s),\overline{Q}(s,t),\overline{u}(s))\big(u(s) - \overline{u}(s)\big)dW(s) \right)Y(t)dt\\
    & +\dbE\int_0^T h_{u}(t,\overline{P}(t),\overline{u}(t))\big(u(s) - \overline{u}(s)\big) dt \\
   =&  \dbE\int_0^T \left( \int_t^T b_{u}(s,t,\overline{P}(t),\overline{Q}(t,s),\overline{u}(t))Y(s)ds\right)
       \big(u(t) - \overline{u}(t)\big)dt\\
    & +\dbE\int_0^T \int_0^t \sigma_{u}(t,s,\overline{P}(s),\overline{Q}(s,t),\overline{u}(s))Z(t,s)\big(u(s) - \overline{u}(s)\big)dsdt\\
    & +\dbE\int_0^T h_{u}(t,\overline{P}(t),\overline{u}(t))\big(u(s) - \overline{u}(s)\big) dt \\
   =&  \dbE\int_0^T \bigg(\int_t^T \big[b_{u}(s,t,\overline{P}(t),\overline{Q}(t,s),\overline{u}(t))Y(s)\\
    &  \ \ \ \ \ \ \ \ \ \ \ \ \ \ \ \ \   +\sigma_{u}(s,t,\overline{P}(t),\overline{Q}(t,s),\overline{u}(t))Z(s,t)\big] ds
           + h_{u}(t,\overline{P}(t),\overline{u}(t))\bigg)\big(u(t) - \overline{u}(t)\big) dt\\
   =&  \dbE\int_0^T \big[Y_{0}(t) + h_{u}(t,\overline{P}(t),\overline{u}(t))\big] \big(u(t) - \overline{u}(t) \big) dt.
\end{split}
\end{equation*}
Since the above holds for all $u(\cdot)\in \mathcal{U}$, we obtain (\ref{85}).
\end{proof}
Note that if we define
\begin{equation}\label{86}
\begin{split}
 H(t,&\overline{P}(t),\overline{Q}(\cdot,t),\overline{u}(t),Y(t),Z(\cdot,t),u)
  \deq -\big[Y_{0}(t) + h_{u}(t,\overline{P}(t),\overline{u}(t))\big]u\\
  =& -\dbE \bigg\{h_{u}(t,\overline{P}(t),\overline{u}(t)) + \int_t^T \bigg[b_{u}(s,t,\overline{P}(t),\overline{Q}(t,s),\overline{u}(t))Y(s)\\
   & \ \ \ \ \ \ \ \  +\sigma_{u}(s,t,\overline{P}(t),\overline{Q}(t,s),\overline{u}(t))Z(s,t)\bigg] ds \bigg| \mathcal{F}_{t} \bigg\}u,
\end{split}
\end{equation}
then (\ref{85}) can be written as
\begin{equation}\label{87}
   H(t,\overline{P}(t),\overline{Q}(\cdot,t),\overline{u}(t),Y(t),Z(\cdot,t),\overline{u}(t))
=  \max_{u\in U} H(t,\overline{P}(t),\overline{Q}(\cdot,t),\overline{u}(t),Y(t),Z(\cdot,t),u).
\end{equation}
We call $H(\cdot)$ defined by (\ref{86}) the Hamiltonian of our optimal control problem, call (\ref{85}) (and (\ref{87})) the maximum condition, and call the first BDSVIE in (\ref{84}) the adjoint equation of FDSVIE (\ref{76}), along the optimal pair $(\overline{P}(\cdot),\overline{Q}(\cdot,\cdot),\overline{u}(\cdot))$.

\ms

Finally, by putting (\ref{76}), (\ref{84}) and (\ref{85}) together (dropping the bars in
$(\overline{P}(\cdot),\overline{Q}(\cdot,\cdot),\overline{u}(\cdot))$), we obtain the following system:
\bel{88}\left\{\ba{ll}
\ds P(t)=\varphi(t) + \int_0^t b(t,s,P(s),Q(s,t),u(s)) ds \\
\ns\ns\ds\qq\q + \int_0^t \sigma(t,s,P(s),Q(s,t),u(s))dW(s) + \int_0^t Q(t,s) d\overleftarrow{B}(s),\\
\ns\ns\ds Y(t)=h_{p}(t,P(t),u(t))-\int_t^T Z(t,s) dW(s)\\
\ns\ns\ds\qq\q  + \int_t^T \bigg(b_{p}(s,t,P(t),Q(t,s),u(t))Y(s)
+ \sigma_{p}(s,t,P(t),Q(t,s),u(t))Z(s,t)\bigg) ds\\
\ns\ns\ds\qq\q +\int_t^T \bigg(b_{q}(s,t,P(t),Q(t,s),u(t))Y(s)
+\sigma_{q}(s,t,P(t),Q(t,s),u(t))Z(s,t)\bigg) d\overleftarrow{B}(s),\\
\ns\ns\ds Y_{0}(t)= \int_t^T \bigg(b_{u}(s,t,P(t),Q(t,s),u(t))Y(s) \\
\ns\ns\ds\qq\qq\qq + \sigma_{u}(s,t,P(t),Q(t,s),u(t))Z(s,t)\bigg) ds - \int_t^T Z_{0}(t,s) dW(s),\\

\ns\ns\ds  \big[Y_{0}(t)+h_{u}(t,P(t),u(t))\big]\big(v-u(t)\big)\geq 0, \qq \forall v\in U, \ t\in[0,T], \ \text{a.s.}
\ea\right.\ee
This is a couple of FDSVIE and BDSVIE systems.
The coupling is through the maximum condition(via $u(\cdot)$).
We call (\ref{88}) a forward-backward doubly stochastic Volterra integral equation (FBDSVIE, for short).
Such kinds of equations are still under careful investigation.

\ms

The purpose of presenting a simple optimal control problem of FDSVIEs here is to realize a major motivation of studying BDSVIEs. It is possible to discuss Bolza type cost functional. However, we have no intention to have a full exploration of general optimal control problems for FDSVIEs in the current paper since such kind of general problems are much more involved and they deserve to be addressed in another paper. We will report further results along that line in a forthcoming paper (see Shi, Wen and Xiong \cite{Shi-Wen-Xiong2019}).

\bibliographystyle{elsarticle-num}

\end{document}